\def\ol{\overline}
\newtheorem{theorem}{Theorem}
\newtheorem{proposition}[theorem]{Proposition}
\newtheorem{lemma}[theorem]{Lemma}
\def\acknowledgment{\par\addvspace{17pt}\small\rmfamily
\trivlist\if!\ackname!\item[]\else
\item[\hskip\labelsep
{\bfseries\ackname}]\fi}
\def\C{\mathbb{C}}
\def\R{\mathbb{R}}
\def\N{\mathbb{N}}
\def\D{\mathbb{D}}
\def\B{\mathbb{B}}
\def\cF{\mathcal{F}}
\newcommand{\du}{\mathrm{d}}
\newcommand{\cO}{\mathcal{O}_J}
\newcommand{\clD}{{\overline{\,\D}}}
\newcommand{\Psh}{\mathrm{Psh}_J}
\def\z{\zeta}
\begin{document}
\title{Lelong functional on almost complex manifolds}
\author{Barbara Drinovec Drnov\v sek \& Uro\v s Kuzman}
\address{Faculty of Mathematics and Physics\\ University of Ljubljana\\
Institute of Mathematics, Physics and Mechanics\\Jadranska 19, 1000 Ljubljana, Slovenia}
\email{barbara.drinovec@fmf.uni-lj.si}
\email{uros.kuzman@gmail.com}
\thanks{Research supported by grants P1-0291 and J1-5432, Republic of Slovenia.}

%
%

\subjclass[2000]{32U05, 32U25, 32Q60, 32Q65}
\date{\today}
\keywords{almost complex manifolds, J-holomorphic disc, J-plurisubharmonic function, Lelong functional, disc functional, envelope}

\begin{abstract}
We establish plurisubharmonicity of the envelope
of Lelong functional on almost complex manifolds of real dimension four,
thereby we generalize the corresponding result for complex manifolds.
\end{abstract}

\maketitle

%
%
%
%
%
%

\section{Introduction and the main result}
Let $\D=\{\zeta\in \C\colon |\zeta|< 1\}$ denote the open unit disc in 
the complex plane $\C$. Given a smooth almost complex manifold $(M,J)$ 
we denote by $\cO(\clD,M)$ 
the set of $J$-holomorphic discs in $M$, i.e.,
the set of smooth maps $f\colon \clD \to M$ 
that are $J$-holomorphic in some neighborhood of $\clD$. 
The {\em Lelong functional} associated to a 
nonnegative real function $\alpha$ on $M$ is defined by
\begin{equation}
\label{eq:Lelong}
	L_\alpha(f)= \sum_{\z\in \D} \alpha(f(\z))\,  \log|\z|,
	\qquad f\in\cO(\clD,M).
\end{equation}
Given a $J$-plurisubharmonic function $u\in \Psh(M)$ and a point $p\in M$
we denote by $\nu_u(p) \in [0,+\infty]$ its {\em Lelong number}
at $p$: in any local coordinate system 
$\psi$ on $M$, with $\psi(p)=0$, we have
\[
	\nu_u(p)= \lim_{r\to 0} \frac{\sup_{|\psi(q)|\le r} u(q)}{\log r}.
\]
(We consider the constant function $u=-\infty$ as $J$-plurisubharmonic
and set $\nu_{-\infty}=+\infty$.)
Given a nonnegative function $\alpha\colon M\to \R_+$,  we consider 
the corresponding extremal function on $M$
\begin{equation} \label{eq:maxFalpha}
		v_\alpha=\sup\{u \in \Psh(M)\colon u\le 0,\ \nu_u \ge\alpha\}.
\end{equation}

\begin{theorem}\label{main_thm}
Suppose that $(M,J)$ is a smooth almost complex manifold of real dimension four.
For every nonnegative function $\alpha$ on $M$ the extremal function $v_\alpha$ is $J$-plurisubharmonic and equals the envelope of the Lelong functional on $M$:
$$v_\alpha(p)=\inf\left\{L_\alpha(f)\colon f\in \cO(\clD,M),\, f(0)=p\right\} .$$
\end{theorem}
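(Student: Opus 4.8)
Throughout write $E_\alpha(p)=\inf\{L_\alpha(f)\colon f\in\cO(\clD,M),\ f(0)=p\}$ for the envelope. The plan is to prove the two inequalities $v_\alpha\le E_\alpha$ and $E_\alpha\le v_\alpha$ separately, the second by verifying that $E_\alpha$ is an admissible competitor in the supremum \eqref{eq:maxFalpha} defining $v_\alpha$; plurisubharmonicity of $v_\alpha$ then follows, since $v_\alpha=E_\alpha$ and the competitor we produce is itself $J$-plurisubharmonic. As both the $J$-plurisubharmonicity and the Lelong-number condition are local, I would first localize to a coordinate chart in which $J$ is a small perturbation of the standard complex structure.

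For the inequality $v_\alpha\le E_\alpha$, fix a competitor $u\in\Psh(M)$ with $u\le 0$ and $\nu_u\ge\alpha$, and fix $f\in\cO(\clD,M)$ with $f(0)=p$. Since $u$ is $J$-plurisubharmonic and $f$ is $J$-holomorphic, $w=u\circ f$ is subharmonic on a neighborhood of $\clD$ and $w\le 0$. Writing $\mu_w$ for its Riesz measure, the Poisson--Jensen formula together with $w\le 0$ on $\partial\D$ yields
\[
	w(0)\le\int_{\D}\log|\z|\,d\mu_w(\z)\le\sum_{\z\in\D}\nu_w(\z)\log|\z|\le\sum_{\z\in\D}\alpha(f(\z))\log|\z|=L_\alpha(f),
\]
where the middle step drops the nonpositive continuous part of $\mu_w$ and the last step uses $\log|\z|\le 0$. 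Taking the supremum over $u$ and the infimum over $f$ gives the claim. The one nontrivial input is the comparison of Lelong numbers under composition, $\nu_w(\z)\ge\nu_u(f(\z))$, which in real dimension four I would obtain from positivity of the intersection of the disc $f$ with the singular structure of $u$.

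The heart of the proof is to show that $E_\alpha$, or its upper semicontinuous regularization, is $J$-plurisubharmonic. In real dimension four I would use the characterization that $w$ is $J$-plurisubharmonic precisely when $w\circ g$ is subharmonic for every $g\in\cO(\clD,M)$, so it suffices, after verifying upper semicontinuity of $E_\alpha$, to establish the sub-mean-value inequality
\[
	E_\alpha(g(0))\le\frac{1}{2\pi}\int_0^{2\pi}E_\alpha(g(e^{i\theta}))\,d\theta
\]
along every such $g$. I would prove this by a Poletsky-type gluing: for each boundary point pick a near-optimal disc $h_\theta$ with $h_\theta(0)=g(e^{i\theta})$ and $L_\alpha(h_\theta)\le E_\alpha(g(e^{i\theta}))+\eps$, and then attach the family $(h_\theta)_\theta$ to the boundary circle of $g$ to manufacture a single disc $F\in\cO(\clD,M)$ with $F(0)=g(0)$ and $L_\alpha(F)\le\frac{1}{2\pi}\int_0^{2\pi}L_\alpha(h_\theta)\,d\theta+\eps$, whence the inequality upon letting $\eps\to 0$. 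This gluing is where I expect the main obstacle: in the almost complex category the glued object is only approximately $J$-holomorphic, so one must solve a Riemann--Hilbert type boundary value problem for the $\debar$-operator to correct it to a genuine $J$-holomorphic disc, while keeping the correction small enough that $L_\alpha(F)$ stays controlled and the intersections with $\{\alpha>0\}$ remain discrete. It is exactly the existence, transversality, and positivity-of-intersection properties underpinning this construction that force the restriction to real dimension four.

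Finally I would check that $E_\alpha$ is admissible. Using constant discs at points where $\alpha$ vanishes, and the degenerate $-\infty$ value of the sum elsewhere, one gets $E_\alpha\le 0$. For the bound $\nu_{E_\alpha}\ge\alpha$, fix $q$ and, for $x$ near $q$, construct a $J$-holomorphic disc $f$ with $f(0)=x$ passing through $q$ at a parameter $\z_0$ with $|\z_0|\asymp\dist(x,q)$ and meeting $\{\alpha>0\}$ discretely; discarding the nonpositive contributions of the remaining parameters gives $L_\alpha(f)\le\alpha(q)\log|\z_0|+O(1)$, so that $\sup_{|\psi(x)|\le r}E_\alpha\le\alpha(q)\log r+O(1)$ and hence $\nu_{E_\alpha}(q)\ge\alpha(q)$. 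Thus $E_\alpha$ is a competitor in \eqref{eq:maxFalpha}, giving $E_\alpha\le v_\alpha$. Combined with the first inequality this yields $v_\alpha=E_\alpha$, and the $J$-plurisubharmonicity established for $E_\alpha$ transfers to $v_\alpha$, completing the proof.
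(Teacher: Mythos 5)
Your overall skeleton (two inequalities, admissibility of the envelope in \eqref{eq:maxFalpha}, a Poletsky-type gluing for plurisubharmonicity) has the right shape, and your first inequality $v_\alpha\le EL_\alpha$ and your Lelong-number estimate $\nu_{EL_\alpha}\ge\alpha$ essentially coincide with the paper's arguments. But the central step, as you describe it, has a genuine gap: you propose to take fiber discs $h_\theta$ that are near-optimal for the \emph{Lelong} functional and to glue them into a single disc $F$ satisfying $L_\alpha(F)\le\frac{1}{2\pi}\int_0^{2\pi}L_\alpha(h_\theta)\,d\theta+\eps$. No construction in the paper (nor in the classical Poletsky/L\'arusson--Sigurdsson setting) delivers this estimate, and it is not clear it can be delivered: the glued disc, of the form $\zeta\mapsto G(\zeta e^{u_n(\zeta)},\zeta^n e^{v_n(\zeta)})$, meets each fiber disc only along a sparse set of parameters, so there is no mechanism by which it inherits the \emph{entire} (possibly infinite) weighted sum $\sum_{\z}\alpha(h_\theta(\z))\log|\z|$ of every $h_\theta$. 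The paper circumvents this by introducing the auxiliary functional $K_\alpha(f)=\inf_{\z\in\D}\alpha(f(\z))\log|\z|$, which dominates $L_\alpha$ and whose near-optimal discs only require one good point $b_j$ each; the torus-gluing then only has to arrange that the glued disc passes through the single value $y_j=f_z(b_j)$ at roughly $n|I_j|$ parameters of modulus about $|b_j|^{1/n}$, which is exactly what the root-counting Lemma \ref{resitve enacbe} provides. Your direct version would additionally require tracking infinitely many interpolation points per fiber, varying measurably in $\theta$, with uniform control --- none of which you address.

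A second, related gap: you defer to ``the characterization that $w$ is $J$-plurisubharmonic precisely when $w\circ g$ is subharmonic'' applied to $E_\alpha$ ``after verifying upper semicontinuity of $E_\alpha$,'' but upper semicontinuity of $EL_\alpha$ is itself not accessible directly --- deforming a disc while controlling its full Lelong sum is as hard as the gluing problem above. The paper only proves upper semicontinuity of $k_\alpha=EK_\alpha$ (Lemma \ref{translacija}, deforming a disc while fixing the center and \emph{one} other point), and then obtains plurisubharmonicity not by a new sub-mean-value argument but by invoking the already-established Poletsky theorem for the Poisson functional on almost complex four-manifolds \cite{Kuz}: one shows $EP_{k_\alpha}=v_\alpha$ and then squeezes $v_\alpha\le EL_\alpha\le EP_{k_\alpha}$. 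To repair your proof you would need to insert this reduction through $K_\alpha$ and the Poisson envelope (or supply a genuinely new gluing argument with full Lelong-sum control, which would be a substantial additional contribution).
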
 

The disc formula for the Lelong functional was first obtained in the seminal 
work by Poletsky \cite{Poletsky1993}. 
On manifolds it was proved by L\'arusson and Sigurdsson, 
first for domains in Stein manifolds \cite{Larusson-Sigurdsson1998},
and then, following the work of Rosay \cite{Rosay1,Rosay2},
on all complex manifolds \cite{Larusson-Sigurdsson2003}.
For locally irreducible complex spaces the above result was 
proved recently \cite{BDDFF}.

The generalization of the disc formulas to almost complex manifolds became possible recently: deformation theory of big $J$-holomorphic discs was developed by Sukhov and Tumanov \cite{ST, ST3} and in the case of real dimension $4$ they glued $J$-holomorphic discs to tori \cite{ST2}. We include an overview of these results in section 2. 
%
%
%
%
%
%

\section{On $J$-holomorphic discs}
Let us first recall the basic notations concerning $J$-holomorphic discs. 
Denote by $J_{st}$ the standard almost complex structure corresponding to the multiplication by $i=\sqrt{-1}$ in $\mathbb{R}^{2n}\cong\mathbb{C}^n$. Let $(M,J)$ be an almost complex manifold. By a $\emph{disc in M}$ we mean a 
$\mathcal{C}^1$-map from a neighborhood of the closed unit disc $\overline{\D}\subset\mathbb{C}$ to $M$. The disc $f$ is said to be \emph{J-holomorphic} if 
\[
  \bar{\partial}_J f=\frac{1}{2}\left(\du f - J(f) \circ \du f \circ J_{st}\right)=0.
\] 
Assuming that $J$ is of class $\mathcal{C}^k$, $k\in\N$, it follows that $J$-holomorphic discs are  
of class $\mathcal{C}^{k,\beta}$ for every $0<\beta<1$ (see \cite{IS}). 
We assume for simplicity that the structure $J$ is $\mathcal{C}^{\infty}$-smooth. 

In local coordinates $z\in\mathbb{C}^n$ an almost complex structure $J$ is represented by a $\mathbb{R}$-linear operator satisfying $J(z)^2=-I.$ Assume that $J+J_{st}$ is invertible along the disc. The above condition can be rewritten in the form
\begin{eqnarray}\label{J-hol}f_{\bar{\zeta}}+A(f)\overline{f_{\zeta}}=0,\end{eqnarray}
where $\zeta=x+iy\in\D$ and
$
	A(z)(v)=(J_{st}+J(z))^{-1}(J(z)-J_{st})(\bar{v})
$ 
is a complex linear endomorphism for every $z\in\C^n$. Hence 
$A$ can be considered as a $n\times n$ complex matrix of the same regularity as $J(z)$ acting on $v\in\C^n$. We call $A$ the \emph{complex matrix of} $J$.  

\subsection{Deformations of $J$-holomorphic discs}
Let us introduce the Cauchy-Green operator 
\begin{eqnarray*} T(f)(z)=\frac{1}{\pi}\int_{\D}\frac{f(\zeta)}{z-\zeta}\, \du x \, \du y .\end{eqnarray*} 
We will use two of its classical properties. Firstly, $T$ solves the usual $\bar{\partial}$-equation and secondly, $T$ is a bounded operator mapping the space $\mathcal{C}^{k,\beta}(\overline{\D})$ into the space $\mathcal{C}^{k+1,\beta}(\overline{\D})$ \cite{VEKUA}.
Hence, we can define
\begin{eqnarray}\label{J-hol integralska}
\Phi(f)=f+T\left[A(f)\overline{f_{\zeta}}\right],
\end{eqnarray}
and by (\ref{J-hol}) $\Phi(f)$ is a usual holomorphic vector function. 
Moreover, the operator $\Phi$ maps the space $\mathcal{C}^{k,\beta}(\overline{\D})$ to itself. Let us discuss its invertibility. 

Fix a point $p\in M$. Using dilations one can 
find a local chart centered at $p$ in which the norms of $A$ 
and its derivatives are small (see for instance \cite[Lemma 2.1]{ST3}). 
Thus $\Phi$ is a small perturbation of the identity providing a one-to-one correspondence between the $J$-holomorphic discs and the usual holomorphic discs close to the origin in $\C^n$. This gives the classical Nijenhuis and Woolf theorem stating that given 
a tangent vector $v\in T_p M$ there exists a $J$-holomorphic disc $f$ centered in $p$ and such that $\mathrm{d}f(0)(\partial/\partial\mathrm{Re}\zeta)=\lambda v$ for some $\lambda>0$. 
The disc $f$ depends smoothly on the initial data $(p,v)$ and on the structure $J$.
See \cite{SIKORAV} for a short proof. 

In what follows we will use a slightly modified Cauchy-Green transform. For $b\in\D^*:=\D\backslash\left\{0\right\}$ we define  
\begin{eqnarray}\label{T0a}T_{0,b}(f)(z)=T(f)(z)-T(f)(0)-\frac{z}{b}\left[T(f)(b)-T(f)(0)\right].\end{eqnarray}
Since $T_{0,b}(f)(0)=T_{0,b}(f)(b)=0,$ the operator similar to $\Phi$ fixes the center and one other point of the disc.

\begin{lemma}\label{posledica NW}
Let $(M,J)$ be an almost complex manifold and $p\in M$. There exist a local coordinate system $(U,\psi)$ on $M$ centered at $p$  and
a neighborhood $V$, $p\in V\subset U$,
such that for every $q\in V$ we have a 
$J$-holomorphic disc $f_q\in \mathcal{O}_J(\overline{\D},M)$ with properties $f_q(0)=q$ and $f_q(-|\psi(q)|)=p.$
\end{lemma}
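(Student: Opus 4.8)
The plan is to solve the $J$-holomorphy equation~(\ref{J-hol}) subject to two interpolation constraints by a contraction-mapping argument built on the modified Cauchy--Green operator $T_{0,b}$ from~(\ref{T0a}). First I would fix good local coordinates $(U,\psi)$ centered at $p$: after the dilation recalled above (cf.\ \cite[Lemma 2.1]{ST3}) we may assume $\psi(p)=0$, $J(p)=J_{st}$ (so that $A(0)=0$), and that $\|A\|_{\mathcal{C}^{k}}$ is as small as we wish on the ball $\{|z|\le 2\}$; in particular $J_{st}+J$ is invertible there, so~(\ref{J-hol}) is available along every disc staying in that ball. Writing $F=\psi\circ f_q$ and $b=-|\psi(q)|\in\D^*$, the lemma reduces to producing, for each $q\in V$, a map $F\colon\clD\to\C^n$ solving $F_{\bar\z}+A(F)\overline{F_\z}=0$ with $F(0)=\psi(q)$ and $F(b)=0=\psi(p)$.

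As a model I would take the affine holomorphic disc $F_0(\z)=\psi(q)\bigl(1-\z/b\bigr)$, which already satisfies $F_0(0)=\psi(q)$, $F_0(b)=0$ and whose coefficient $\psi(q)/b$ has unit norm, so $F_0$ stays in $\{|z|\le 2\}$ for $q$ close to $p$. I would then seek $F$ as a fixed point of
\[
  \mathcal{T}(F)=F_0-T_{0,b}\!\left[A(F)\overline{F_\z}\right],
\]
which is the fixed-point form of $F+T_{0,b}[A(F)\overline{F_\z}]=F_0$, i.e.\ the analogue of the operator $\Phi$ in~(\ref{J-hol integralska}) adapted to fix $0$ and $b$. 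Two observations make this the correct operator. Since $T_{0,b}(g)$ vanishes at $0$ and at $b$, any fixed point automatically inherits $F(0)=\psi(q)$ and $F(b)=0$; and since the two correction terms in~(\ref{T0a}) are holomorphic in $\z$, one has $(T_{0,b}[g])_{\bar\z}=g$, so a fixed point satisfies $F_{\bar\z}=-A(F)\overline{F_\z}$ and is therefore $J$-holomorphic. It then remains to run the Banach fixed-point theorem for $\mathcal{T}$ on a small ball around $F_0$ in $\mathcal{C}^{k,\beta}(\clD,\C^n)$: the quadratic-type bound $\|T_{0,b}[A(F)\overline{F_\z}]\|\le C\|A\|_{\mathcal{C}^{k}}\|F\|^{2}$ and the corresponding Lipschitz estimate for $\mathcal{T}(F_1)-\mathcal{T}(F_2)$ give both self-mapping and contraction once $\|A\|_{\mathcal{C}^k}$ is small, yielding a unique solution $F_q$ and hence $f_q=\psi^{-1}\circ F_q$.

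The step I expect to be the main obstacle is making these estimates \emph{uniform} in $q$, because $b=-|\psi(q)|\to 0$ as $q\to p$ and the term $\tfrac{\z}{b}\bigl[T(g)(b)-T(g)(0)\bigr]$ in~(\ref{T0a}) carries the dangerous factor $1/b$. The resolution I would use is precisely the derivative-gaining property of the Cauchy--Green transform: since $T$ maps $\mathcal{C}^{k-1,\beta}(\clD)$ into $\mathcal{C}^{k,\beta}(\clD)$, the function $T(g)$ is $\mathcal{C}^1$, and the mean-value estimate gives
\[
  \frac{|T(g)(b)-T(g)(0)|}{|b|}\le \sup_{\clD}|\nabla T(g)|\le C\,\|g\|_{\mathcal{C}^{k-1,\beta}},
\]
a bound independent of $b\in\D^*$. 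Consequently $T_{0,b}\colon\mathcal{C}^{k-1,\beta}(\clD)\to\mathcal{C}^{k,\beta}(\clD)$ is bounded uniformly in $b$, so the self-mapping radius and contraction constant can be chosen uniformly for all $q$ in a single neighborhood $V$ of $p$. Finally, to land in $\cO(\clD,M)$ rather than merely obtaining a solution on $\clD$, I would set up $T$ and $T_{0,b}$ over a slightly larger disc $\rho\D$ with $\rho>1$ (both $0$ and $b$ lie well inside it), so that $f_q$ is $J$-holomorphic on a neighborhood of $\clD$; the degenerate case $q=p$, where $b=0$, is covered by the constant disc $f_p\equiv p$.
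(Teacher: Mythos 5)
Your proposal is correct and follows essentially the same route as the paper: the model disc $F_0(\z)=\psi(q)(1-\z/b)$ is exactly the paper's $\phi_q$, and your fixed-point equation $\mathcal{T}(F)=F_0-T_{0,b}[A(F)\overline{F_\z}]$ is just the inversion of the paper's operator $\Phi_{0,b}$ after shrinking $\|A\|$ via dilation. You in fact supply a detail the paper only asserts, namely the uniform boundedness of $T_{0,b}$ as $b\to 0$ via the difference-quotient/derivative-gain estimate, as well as the degenerate case $q=p$.
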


\begin{proof}
Let $\epsilon>0$ and $\B=\left\{p\in\R^{2n}; \; \left|p\right|<2\right\}$. By \cite{ST3} there exist a neighborhood $U$ of $p$ and a smooth chart $\psi\colon U\to \B$ such that $\psi(p)=0$, $\psi^*(J)(p)=J_{st}$ and that $\left\|A\right\|_{\mathcal{C}^1(\overline{\B})}<\epsilon.$ Using a cut off function we extend $\psi^*(J)$ to $\R^{2n}$ so that it equals $J_{st}$ outside a small neighborhood of $\overline{\B}.$
 
We define an operator mapping the space $\mathcal{C}^{1,\beta}(\overline{\D})$ into itself by
\begin{eqnarray}\label{CG modify}
\Phi_{0,b}(f)=f+T_{0,b}\left[A(f)\overline{f_{\zeta}}\right].
\end{eqnarray}
Since $T_{0,b}\colon \mathcal{C}^{k,\beta}(\overline{\D})\to \mathcal{C}^{k+1,\beta}(\overline{\D})$ is uniformly bounded for $\left|b\right|\leq b_0<1,$ we can fix $\epsilon$ small enough so that $\Phi_{0,b}$ is invertible for every $\left|b\right|<1/2.$ 

Let $\phi_q(z)=\psi(q)+z\frac{\psi(q)}{\left|\psi(q)\right|}$ and $b=-|\psi(q)|.$ 
For $\psi(q)$ close enough to the origin (i.e. $q$ close enough to $p$)
we have $\left|b\right|<1/2$ and the disc 
$$f_q(z)=\psi^{-1}\left(\Phi^{-1}_{0,b}(\phi_q)(z)\right)$$
is well defined and $J$-holomorphic. 
\end{proof}

The deformation problem changes when we deal with big $J$-holomorphic discs. 
Locally (\ref{J-hol integralska}) can be viewed as a compact (not small) perturbation of the identity with a Fredholm derivative. In contrast, the 
kernel of its linearization might be non-trivial and hence the Implicit function theorem can not be applied directly. However, $\Phi$ can be made 
invertible by adding a small linear $J_{st}$-holomorphic term. This was done in a recent paper by Sukhov and Tumanov \cite{ST}. Let us briefly explain their
methods. 

Let $f\in \mathcal{O}_J(\overline{\D},M)$. The graph of $f$ is a pseudoholomorphic embedding of $\overline{\mathbb{D}}$
to the manifold $(\mathbb{R}^{2}\times M, J_{st}\otimes J)$. 
Let $\pi_M\colon \R^2\times M\to M$ denote the projection to $M$.
One can find a smooth coordinate map $\Pi(\zeta,q)=(\zeta,z(\zeta,q))$ in the neighborhood of the graph of $f$ 
such that $z(\zeta,f(\zeta))=0$ for $\zeta\in\overline{\D}$
and that the push forward $\tilde{J}=\Pi_*(J_{st}\otimes J)$ is defined on a neighborhood of $\overline{\mathbb{D}}\times \left\{0\right\}\subset \overline{\mathbb{D}}\times \mathbb{C}^{n}$ and $\tilde{J}(\zeta,0)=J_{st}$. Its complex matrix $\tilde{A}$ is of the form 
$$\tilde{A}=\left(\begin{array}{cc}0&0\\ a& A\end{array}\right).$$
A disc $g$ close to $f$ is $J$-holomorphic if and only if its graph is $J_{st}\otimes J$-holomorphic.
The latter is equivalent to $(\zeta,z(\zeta,g(\zeta)))$ being $\tilde{J}$-holomorphic. This holds if and only if $h(\zeta)=z(\zeta,g(\zeta))$ satisfies the equation
\begin{eqnarray}\label{h}h_{\bar{\zeta}}+A(\zeta,h)\overline{h_{\zeta}}+a(\zeta,h)=0.\end{eqnarray}
We seek solutions close to the origin. For a solution $h$ we get the $J$-holomorphic disc $g$ by 
$g(\zeta)=\pi_M(\Pi^{-1}(\zeta,h(\zeta))$, $\zeta\in\overline{\D}$.

We define an operator mapping the space $\mathcal{C}^{k,\beta}(\overline{\D})$ into itself:
\begin{eqnarray}\label{psi}\Psi_{0,b}(h)=h+T_{0,b}\left(A(\zeta,h)\overline{h_{\zeta}}+a(\zeta,h)\right).\end{eqnarray}
Its Fr\'echet derivative at the point $h=0$ is of the form  
\begin{eqnarray}\label{Dphi}
	P_{0,b}(V)=V+T_{0,b}(B_1 V + B_2 \overline{V}),
\end{eqnarray}
where the smooth matrix functions $B_1$ and $B_2$ arise from $A$. There is no $\overline{V_\zeta}$ term since $A(\zeta,0)=0$. The operator $P_{0,b}$ is Fredholm but its kernel might be non-trivial \cite{BOJARSKI}. Hence we need a lemma similar to the \cite[Theorem 3.1]{ST}.  

We introduce the real inner product of vector functions $f=(f_1,\ldots,f_n)$ and $g=(g_1,\ldots,g_n)$:
$$\left\langle f,g\right\rangle=\sum_{j=1}^{n}\int_{\mathbb{D}} f_j\bar{g}_j \mathrm{d}x\mathrm{d}y.$$     

\begin{lemma}\label{popravljen}
Let $N$ be the real dimension of $\ker P_{0,b}$ in the space $\mathcal{C}^{k,\beta}(\overline{\D})$ and $w_1,w_2,\ldots,w_N$ its basis. There exist arbitrarily small polynomial vector functions $p_1,p_2,\ldots,p_N$ such that $p_j(0)=p_j(b)=0$ and that the operator  	
$$\tilde{P}_{0,b}(V):=P_{0,b}(V)+\sum_{j=1}^{N}\left\langle V,w_j\right\rangle p_j $$
is bounded and has trivial kernel. 
\end{lemma}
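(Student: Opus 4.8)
The plan is to fix up the Fredholm operator $P_{0,b}$ by a finite-rank perturbation that kills its kernel, using the inner product $\langle\cdot,\cdot\rangle$ to detect membership in the kernel. The guiding idea is that adding terms of the form $\langle V,w_j\rangle p_j$ injects prescribed values on the chosen basis vectors: if I can arrange the matrix $\bigl(\langle w_i,w_j\rangle\bigr)_{i,j}$ of pairings together with the action of $\tilde P_{0,b}$ on the $w_j$ so that no nonzero combination of the $w_j$ lies in the new kernel, and simultaneously argue that the perturbation does not create kernel outside $\operatorname{span}\{w_1,\dots,w_N\}$, I am done. Since the Gram matrix $G=(\langle w_i,w_j\rangle)$ is symmetric positive definite (the $w_j$ are linearly independent), it is the natural device for separating the kernel directions.

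First I would record that $P_{0,b}$ is Fredholm of index zero (it is a compact perturbation of the identity, as $T_{0,b}$ is compact from $\mathcal C^{k,\beta}$ into itself by the gain of regularity stated above), so $\dim\ker P_{0,b}=\dim\operatorname{coker}P_{0,b}=N$. Next I would make an ansatz $\tilde P_{0,b}(V)=P_{0,b}(V)+\sum_{j=1}^N\langle V,w_j\rangle p_j$ with the $p_j$ to be chosen small, polynomial, and vanishing at $0$ and $b$ (the vanishing guarantees compatibility with the constraints built into $T_{0,b}$ and keeps $\tilde P_{0,b}$ in the right function space, fixing the two marked points). The key computation is to evaluate $\tilde P_{0,b}$ on a general kernel element $V=\sum_i c_i w_i$: since $P_{0,b}w_i=0$, one gets $\tilde P_{0,b}(V)=\sum_{j}\bigl(\sum_i c_i\langle w_i,w_j\rangle\bigr)p_j=\sum_j (Gc)_j\,p_j$.

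The main step is then to choose the $p_j$ so that $\tilde P_{0,b}$ has trivial kernel. I would pick $p_1,\dots,p_N$ to be linearly independent small polynomials vanishing at $0$ and $b$, and chosen so that they are linearly independent modulo the range of $P_{0,b}$; this is possible because $\operatorname{coker}P_{0,b}$ has dimension $N$ and polynomials vanishing at two points are dense, so generic such $p_j$ span a complement to $\operatorname{ran}P_{0,b}$. With this choice, suppose $\tilde P_{0,b}(V)=0$ for some $V$. Projecting onto the complement spanned by the $p_j$ forces the coefficients to vanish, hence $P_{0,b}(V)=0$, so $V\in\ker P_{0,b}=\operatorname{span}\{w_i\}$; writing $V=\sum_i c_iw_i$ and using the displayed identity gives $\sum_j(Gc)_jp_j=0$, whence $Gc=0$ by linear independence of the $p_j$, and finally $c=0$ since $G$ is invertible. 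Thus $\ker\tilde P_{0,b}=\{0\}$.

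The delicate point, and the part I expect to cost the most care, is the simultaneous requirement that the $p_j$ be \emph{arbitrarily small} in the relevant $\mathcal C^{k,\beta}$ norm while still being linearly independent modulo $\operatorname{ran}P_{0,b}$. Scaling a fixed admissible family $p_j\mapsto\delta p_j$ preserves linear independence modulo the range for every $\delta>0$ and shrinks the norm, so smallness and the range condition are compatible; I would verify that the complementary-projection argument is stable under this scaling so that triviality of the kernel persists. A secondary technical check is that $\tilde P_{0,b}$ remains bounded on $\mathcal C^{k,\beta}(\overline{\D})$, which is immediate since each added term is a bounded rank-one operator (the functional $V\mapsto\langle V,w_j\rangle$ is continuous on $\mathcal C^{k,\beta}$ and $p_j$ is fixed). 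I would close by noting that the polynomial, hence smooth, choice of the $p_j$ keeps all constructions within the smooth category used elsewhere.
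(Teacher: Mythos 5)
Your overall architecture is right, and the second half of your argument (the kernel computation via the Gram matrix, the scaling $p_j\mapsto\delta p_j$ to achieve smallness, and the boundedness of the rank-one perturbations) is correct and essentially what the paper leaves implicit. The gap is in the main step: you assert that one can choose $p_1,\dots,p_N$ polynomial, vanishing at $0$ and $b$, and linearly independent modulo $\mathrm{Range}(P_{0,b})$ ``because $\mathrm{coker}\,P_{0,b}$ has dimension $N$ and polynomials vanishing at two points are dense, so generic such $p_j$ span a complement.'' This is not justified. The ambient space $\mathcal{C}^{k,\beta}(\overline{\D})$ consists of all H\"older vector functions, not holomorphic ones; holomorphic polynomials vanishing at $0$ and $b$ are far from dense in it, and their closure is a thin subspace. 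Genericity inside the space $H_{0,b}$ of holomorphic vector functions vanishing at $0$ and $b$ only helps if the projection of $H_{0,b}$ onto $\mathrm{coker}\,P_{0,b}$ is already surjective; a priori $H_{0,b}$ could lie entirely inside $\mathrm{Range}(P_{0,b})$, or project onto a proper subspace of the cokernel, in which case \emph{no} choice of such $p_j$ would kill the kernel. Establishing that $H_{0,b}+\mathrm{Range}(P_{0,b})=\mathcal{C}^{k,\beta}(\overline{\D})$ is exactly the nontrivial content of the lemma.

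The paper proves this by duality: a $V$ orthogonal to both $H_{0,b}$ and $\mathrm{Range}(P_{0,b})$ lies in the kernel of the adjoint $P_{0,b}^*$, which is computed explicitly; the function $W=T(\overline{V_{0,b}})$ then satisfies a first-order elliptic system of the type studied by Bojarski (generalized analytic vectors), and orthogonality to $H_{0,b}$ forces $W$ to vanish on $\C\backslash\overline{\D}$, whence $W\equiv 0$ and $V\equiv 0$ by the uniqueness theory for such systems. You would need to supply an argument of this kind (or invoke the corresponding result of Sukhov--Tumanov) before the rest of your proof can go through; once that is in place, your projection and Gram-matrix computation correctly finishes the job.
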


\begin{proof} 
Let $H_{0,b}$ be the space of holomorphic vector functions $h\in\mathcal{C}^{k,\beta}(\overline{\D})$ such that $h(0)=h(b)=0$. We claim that 
$$H_{0,b}+\mathrm{Range}(P_{0,b})=\mathcal{C}^{k,\beta}(\overline{\D}).$$

Let $V$ be orthogonal to both $H_{0,b}^{\bot}$ and $\mathrm{Range}(P_{0,b})$. The last implies that $V$ is in the kernel of the operator adjoint to $P_{0,b}$. We have
$$P_{0,b}^*(V)(z)=V(z)+\overline{\tilde{B}_1T(\overline{V_{0,b}})}(z)+\tilde{B}_2T(V_{0,b})(z),$$
where $V_{0,b}(\zeta)=\bar{\zeta}(\bar{b}-\bar{\zeta})V(\zeta)$ and $\tilde{B}_j(z)= (z(b-z))^{-1}B_j^T(z)$, $j=1,2$. 

Note that
$W=T(\overline{V_{0,b}})$ satisfies the equation 
$$W_{\bar{\zeta}}+B_1^TW+\frac{\bar{\zeta}(\bar{b}-\bar{\zeta})}{\zeta(b-\zeta)}
\overline{B_2}^{T}\overline{W}=0.$$ 
That is, $W$ is an generalized analytic vector such as studied in \cite{BOJARSKI}. Since 
$V$ is also orthogonal to the space $H_{0,b}$ the function $W$ vanishes on 
$\C\backslash\overline{\D}$. This, similarly to the usual analytic theory, implies that 
$W\equiv 0$ and thus $V\equiv 0$ (see \cite[Corollary 3.4]{ST}). Hence, there exist $p_1, 
p_2,\ldots,p_N\in H_{0,b}$ such that 
\begin{eqnarray}\label{vsota}\mathrm{Span}_{\R}(p_1,p_2,\ldots,p_N)\oplus 
\mathrm{Range}(P_{0,b})=\mathcal{C}^{k,\beta}(\overline{\D}).\end{eqnarray}
Finally, $p_1,p_2,\ldots,p_N$ can be chosen to be polynomials since the linear independence is stable under small perturbations and the range of a Fredholm operator is closed.
\end{proof}

We can now modify the operator ${\Psi}_{0,b}$ defined by (\ref{psi}) 
so that it becomes locally invertible. Indeed, there exist small polynomial functions 
$p_1,p_2,\ldots,p_N$ such that the operator
$$\tilde{\Psi}_{0,b}(h)=\Psi_{0,b}(h)+\sum_{j=1}^{N}\mathrm{Re}(h,w_j)p_j$$
is locally invertible and its inverse $\tilde{\Psi}_{0,b}^{-1}$ is defined for all
$\mathcal{C}^{k,\beta}(\overline{\D})$-small holomorphic vector functions $\phi$. 
We get a $J$-holomorphic disc $g$ close to $f$ by 
$\pi_M(\Pi^{-1}(\zeta,\tilde{\Psi}^{-1}_{0,b}(\phi)(\zeta)))$ where $\phi$
is $J_{st}$-holomorphic. Moreover, $\phi(0)=0$ implies $g(0)=f(0)$, and $\phi(b)=0$
implies $g(b)=f(b)$. 

\begin{lemma}\label{translacija}
Let $p\in M$ and $f_p\in \mathcal{O}_J(\overline{\D},M)$ such that $f_p(0)=p.$ For $b\in \D^*$ there exists a small neighborhood $U$ of $p$ in $M$ 
so that for every $q\in U$ there is a $J$-holomorphic disc $f_q$ centered at $q$ with $f_q(b)=f_p(b).$
In particular, if $\dim_{\R}M=4$ and $f_p$ is a smooth immersion, discs $f_q$ depend smoothly on $q$, 
and the only points where the map $G(q,w)=f_q(w)$, $(q,w)\in U\times\overline{\mathbb{D}}$, is not locally a diffeomorphism, are in $w=b$. 
\end{lemma}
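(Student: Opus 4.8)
The plan is to build $f_q$ by applying the modified invertibility machinery developed just above the statement. Fix $b \in \D^*$ and the starting disc $f_p$ with $f_p(0)=p$. Following the graph construction, I put $f_p$ in the normalized coordinates $\Pi$ so that the associated equation is \eqref{h}, with the operator $\Psi_{0,b}$ of \eqref{psi} and its corrected version $\tilde\Psi_{0,b}$, which is locally invertible near $h=0$. The key point is that $\tilde\Psi_{0,b}^{-1}$ is defined on all $\mathcal{C}^{k,\beta}(\overline{\D})$-small holomorphic vector functions $\phi$, and that the boundary conditions $\phi(0)=0$ and $\phi(b)=0$ translate into $g(0)=f_p(0)=p$ and $g(b)=f_p(b)$ for the resulting $J$-holomorphic disc $g=\pi_M(\Pi^{-1}(\zeta,\tilde\Psi_{0,b}^{-1}(\phi)(\zeta)))$.

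Next I would produce, for each nearby $q$, a holomorphic datum $\phi_q$ that moves the center to $q$ while keeping the value at $b$ fixed. Concretely, write the target displacement in the $\Pi$-coordinates and choose $\phi_q$ to be an affine holomorphic function (a linear polynomial in $\zeta$) with $\phi_q(b)=0$ and $\phi_q(0)$ equal to the coordinate vector prescribing $q$; since $\phi_q$ depends linearly, hence smoothly, on $q$ and $\phi_q \to 0$ as $q\to p$, for $q$ in a small enough neighborhood $U$ of $p$ the datum $\phi_q$ lies in the domain of $\tilde\Psi_{0,b}^{-1}$. Setting $f_q = \pi_M(\Pi^{-1}(\zeta,\tilde\Psi_{0,b}^{-1}(\phi_q)(\zeta)))$ gives a $J$-holomorphic disc with $f_q(0)=q$ and $f_q(b)=f_p(b)$, as required; smooth dependence on $q$ follows from smoothness of $\tilde\Psi_{0,b}^{-1}$, of $\Pi$, and of $q \mapsto \phi_q$.

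For the second assertion, restrict to $\dim_\R M = 4$ (so $n=2$) and assume $f_p$ is a smooth immersion. I would study the differential of $G(q,w)=f_q(w)$ on $U\times\overline{\D}$. In the $w$-direction, $\partial G/\partial w$ at a fixed $q$ is nonvanishing wherever $f_q$ is an immersion, and immersivity is an open condition preserved under the small $\mathcal{C}^1$-perturbation taking $f_p$ to $f_q$. The real issue is the interplay of the $q$- and $w$-directions: because every disc in the family passes through the common point $f_p(b)$ at $w=b$, the family degenerates there, and indeed $\partial G/\partial q$ vanishes at $w=b$ since all $f_q(b)$ coincide. Away from $w=b$ I must show the $4\times 4$ real Jacobian of $G$ is nonsingular. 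Here dimension four is essential: the target has real dimension $4$, matching the domain dimension of $(q,w)$, and the two independent infinitesimal motions — varying the center $q$ (two real parameters) and moving along the disc in $w$ (two real parameters, as $f_q$ is $J$-holomorphic hence its $w$-differential spans a $J$-complex line) — are expected to be transverse for generic $w$. The main obstacle, and the step requiring genuine care, is exactly this transversality computation: I must verify that the span of $\{\partial G/\partial q\}$ is complementary to the $J$-complex tangent line $\mathrm{d}f_q(\partial/\partial w)$ for all $w\neq b$, and pin down that $w=b$ is the \emph{only} locus of degeneracy. I expect this to follow from the explicit affine form of $\phi_q$: the center-displacement field vanishes to first order only at $w=b$ (mirroring that $\phi_q(b)=0$ is the single shared constraint), so the Jacobian drops rank precisely there, and a direct linear-algebra check in the normalized coordinates should close the argument.
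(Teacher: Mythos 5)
Your construction is essentially identical to the paper's: the authors likewise invert $\tilde{\Psi}_{0,b}$ on the affine datum $\phi_s(w)=\left(1-\frac{w}{b}\right)s$ with $q=\pi_M(\Pi^{-1}(0,s))$, reading off $f_q(0)=q$ and $f_q(b)=f_p(b)$ from the normalizations $T_{0,b}(\cdot)(0)=T_{0,b}(\cdot)(b)=0$ and $p_j(0)=p_j(b)=0$. The second assertion is dispatched in the paper with ``the rest can be verified by a straightforward computation,'' so your (admittedly not fully closed) Jacobian analysis --- identifying $w=b$ as the unique degeneracy locus because all discs share the value $f_p(b)$ there --- is, if anything, more explicit than the original.
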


\begin{proof} 
We define $\Pi$ and $\tilde{\Psi}_{0,b}$ in the neighborhood of the graph of $f_p$. 
Take a point $s$ close to the origin in $\C^n$ and define 
$$\phi_{s}(w)= \left(1-\frac{w}{b}\right)s.$$
Let $q=\pi_M(\Pi^{-1}(0,s))$ and  
$f_q(\zeta)=\pi_M(\Pi^{-1}(\zeta,\tilde{\Psi}_{0,b}^{-1}(\phi_s)(\zeta)))$. 
We have $\phi_s(b)=0$, thus $f_q(b)=f_p(b)$. 
Moreover, we have $h_s(0)=s$ which implies $f_q(0)=q$. The rest can be verified by a straightforward computation.
\end{proof}

We end this subsection by proving an advanced form of \cite[Theorem 1.1]{ST}. We show that a $J$-holomorphic disc can be approximated by a $J$-holomorphic immersion fixing the center and one other point. A small change in the proof gives approximation
by immersion fixing any finite number of points.

\begin{proposition}\label{imerzija}
Let $f\in \mathcal{O}_J(\overline{\D},M)$
and $b\in\D^*$. There exists an immersion $g\in \mathcal{O}_J(\overline{\D},M)$ arbitrarily close to $f$ 
such that $f(0)=g(0)$ and $f(b)=g(b).$
\end{proposition}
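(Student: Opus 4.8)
The plan is to combine the local invertibility machinery of the modified operator $\tilde\Psi_{0,b}$ with a genericity / transversality argument to kill the critical points of the disc. First I would set up the deformation as in the discussion preceding Lemma~\ref{translacija}: fix the coordinate map $\Pi$ and the push-forward structure $\tilde J$ adapted to the graph of $f$, and use the locally invertible operator $\tilde\Psi_{0,b}$ to produce, for each $J_{st}$-holomorphic vector function $\phi$ that is small in $\mathcal{C}^{k,\beta}(\overline{\D})$ with $\phi(0)=\phi(b)=0$, a $J$-holomorphic disc $g_\phi(\zeta)=\pi_M(\Pi^{-1}(\zeta,\tilde\Psi_{0,b}^{-1}(\phi)(\zeta)))$ satisfying $g_\phi(0)=f(0)$ and $g_\phi(b)=f(b)$. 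The point $f=g_0$ corresponds to $\phi=0$, so it suffices to find such a $\phi$, arbitrarily small, for which $g_\phi$ is an immersion.

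The key observation is that immersivity is an open and dense condition that can be read off from the derivative of the solution map. I would consider the evaluation of the disc's derivative, $\zeta\mapsto (g_\phi)_\zeta(\zeta)$, as a function of both $\zeta$ and the finite-dimensional parameter $\phi$ ranging over a suitable finite-dimensional family of holomorphic polynomials vanishing at $0$ and $b$. The heart of the matter is to show that this parametrized derivative map is a submersion onto a neighborhood of the zero section, i.e. that by varying $\phi$ one can independently prescribe first-order jets of $g_\phi$ at any prescribed point $\zeta_0\in\overline{\D}$. Since $\tilde\Psi_{0,b}^{-1}$ is a diffeomorphism near $0$ and its linearization is the identity plus a compact term, the infinitesimal variation $\partial_\phi g_\phi|_{\phi=0}$ in the direction of a holomorphic polynomial $\phi$ is, to leading order, just $\phi$ itself (transported by $\Pi$), and the space of holomorphic polynomials vanishing at $0,b$ is rich enough to realize arbitrary derivative corrections away from those two points. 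This gives the desired transversality.

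With transversality in hand, I would invoke the parametric transversality theorem (Thom/Sard): the set of parameters $\phi$ for which $(g_\phi)_\zeta$ vanishes somewhere is a measure-zero, closed set, so for generic arbitrarily small $\phi$ the disc $g_\phi$ has nowhere-vanishing derivative and is therefore an immersion, while still satisfying $g_\phi(0)=f(0)$ and $g_\phi(b)=f(b)$. Here I would use that the derivative of a $J$-holomorphic disc is controlled entirely by $\partial_\zeta$ (the $\bar\partial_J$ equation forces $(g_\phi)_{\bar\zeta}$ to be determined by $(g_\phi)_\zeta$ via $A$), so nonvanishing of $(g_\phi)_\zeta$ at a point is equivalent to immersivity there.

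I expect the main obstacle to be the transversality step: establishing that finitely many holomorphic polynomial perturbations $\phi$, constrained to vanish at $0$ and $b$, suffice to move the first-order jet of $g_\phi$ freely at every point of $\overline{\D}$ simultaneously. One must verify that the compact perturbation in the linearization of $\tilde\Psi_{0,b}^{-1}$ does not collapse the image, and that the constraints $\phi(0)=\phi(b)=0$ do not obstruct immersivity precisely at an uncontrollable interior point—this is why the statement singles out $w=b$ in the companion Lemma~\ref{translacija}. The role of the real dimension four is likely to enter through the dimension count making the critical locus generically empty rather than merely low-dimensional; in higher dimension the analogous argument would still give an immersion, consistent with the remark that a small change handles any finite number of fixed points.
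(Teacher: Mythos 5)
Your route is genuinely different from the paper's. You propose the parametric transversality (Thom/Sard) argument: embed $f$ in a finite-dimensional family $g_\phi$ via $\tilde{\Psi}_{0,b}^{-1}$, show the derivative map $(\zeta,\phi)\mapsto (g_\phi)_\zeta(\zeta)$ is a submersion along its zero set, and conclude that the critical locus is generically empty. This is essentially the strategy of Sukhov--Tumanov \cite[Theorem 1.1]{ST}, which the authors explicitly decline to follow: they instead construct the immersion directly, in the spirit of \cite{BDD}. Their proof locates the finitely many critical points $a_j$ of $f$, uses the local normal form $z^k v+O(|z|^k)$ to define a tangent space of $f(\overline{\D})$ at each $f(a_j)$ and a transverse direction $w_j$, and then builds (via a Cauchy--Green transform normalized at $0$, $b$, and all the $a_j$, with prescribed first derivatives there) a one-parameter deformation $g_\lambda$ whose derivative near $a_j$ splits as $f'(\zeta)+\lambda g_1'(\zeta)+O(\delta,\lambda)$ with the two summands confined to transverse cones; immersivity near the $a_j$ follows from this cone separation, and immersivity elsewhere from smallness of the perturbation. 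The direct construction buys explicit control of where and how the disc is pushed (and extends immediately to interpolation at any finite set of points); your genericity argument, if completed, is shorter to state and dimension counting explains cleanly why $\dim_{\R}M\ge 4$ suffices and why higher dimensions are even easier.

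The gap in your proposal is the step you yourself flag: you do not establish the submersion property, and the heuristic you offer for it is too optimistic. The linearization of $\tilde{\Psi}_{0,b}^{-1}$ at $0$ is the inverse of $\mathrm{id}+T_{0,b}(B_1\cdot+B_2\overline{\,\cdot\,})+(\text{finite rank})$, which is the identity plus a \emph{compact} operator, not the identity plus a \emph{small} one: for a big disc $f$ the matrices $B_1,B_2$ need not be small, so $\partial_\phi g_\phi|_{\phi=0}$ is not ``to leading order just $\phi$,'' and one cannot dismiss the possibility that at some point $\zeta_0$ the correction term cancels $\phi'(\zeta_0)$ for every $\phi$ in a chosen finite-dimensional family. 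Ruling this out is exactly the hard part of \cite{ST} (it rests on the theory of generalized analytic vectors, as in Lemma \ref{popravljen}), and your proposal leaves it untouched. Two smaller points: the transversality condition you need is only that the evaluation of the jet be surjective at each individual zero of $(g_\phi)_\zeta$, not that the jets be movable ``freely at every point simultaneously,'' which is a stronger and unnecessary requirement; and the special role of $w=b$ in Lemma \ref{translacija} concerns the failure of $q\mapsto f_q(b)$ to be injective (all discs pass through $f_p(b)$), not an obstruction to immersivity of an individual disc at $\zeta=b$ --- the constraints $\phi(0)=\phi(b)=0$ leave $\phi'(0)$ and $\phi'(b)$ free, so no such obstruction arises.
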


\begin{proof}
In \cite{ST} the proof is based on the transversality theory. We construct $g$ 
directly; we follow \cite{BDD} where the approximation
with interpolation on a finite set of points by immersions was obtained for 
$J_{st}$-holomorphic discs. The idea of the proof is to push the disc slightly
in an almost orthogonal direction near any critical point; this provides 
regularity near the critical points and if the contribution is  small enough
we get an immersion. 

First we choose a smooth Riemannian metric $\rho$ on $M$.
We denote by $f'=f_\zeta$. We may approximate $f$ by $\zeta\mapsto f(r\zeta)$ for some $r$ close to $1$
and therefore we may assume that all critical points of $f$ lie in $\D$.
Let $a_j\in \D$, $j=1,\ldots,m$, be the critical points $f$, i.e., $f'(a_j)=0$. 
Let $(\psi_j,U)$ be a local chart around $f(a_j)$ such that $\psi_j(a_j)=0$ and 
$\psi_j^*(J)(0)=J_{st}$. By (\ref{J-hol}) there exist $v\in \C^n$ and $k\geq 2$ such that 
$\psi_j\circ f(z)= z^{k}v+O(\left|z\right|^{k})$ \cite{MCDUFF}. 
Hence there is a natural way of determining the tangent space $T_{f(a_j)}f(\overline{\D}).$ 
Let $w_j\in T_{f(a_j)}M$ be orthogonal to this space. Working locally in the coordinates 
$\Pi$ defined in the neighborhood of the graph of $f$ this direction corresponds to some 
vector $(0,v_j)\in\C\times\C^n$. We may choose $v_j$ small.

We claim to have a solution $h$ of (\ref{h}) satisfying $h(0)=h(b)=h(a_j)=0$ and 
$h'(a_j)=v_j$ for $j=1,\ldots, m$. Indeed, we define an operator similar to the one in 
(\ref{psi}) using a different normalization of the Cauchy-Green transform $T$. For a 
vector valued function $f$ we set $P_f$ to be a polynomial satisfying $P_f(0)=T(f)(0),$ 
$P_f(b)=T(f)(b)$, $\ P_f(a_j)=T(f)(a_j)$ and $P_f'(a_j)=T(f)'(a_j).$ 
We define $T_P(f)(z)=T(f)(z)-P_f(z)$ and 
$$\Psi_P(h)=h+T_P(A(h)\overline{h_\zeta}).$$
Again, $\Psi_P$ has to be modified into $\tilde{\Psi}_P$ in order become invertible, that is, a result similar to Lemma \ref{popravljen} has to be proved. After that the problem 
becomes a matter of finding a $J_{st}$-holomorphic polynomial $\phi_h=\tilde{\Psi}_P(h)$ 
with the same interpolation properties as $h$. We leave the details to the reader. 

Let $g_1(\zeta)=\pi_M(\Pi^{-1}(\zeta,\tilde{\Psi}_P^{-1}(\phi_h)(\zeta))$, $\zeta\in  \overline{\D}$,
and by the above we have $g_1'(\zeta)=w_j$.
We fix conic neighborhoods $\mathcal{U}_j$ and $\mathcal{V}_j$ of 
$T_{f(a_j)}f(\overline{\D})$  and of $w_j$ in $T_{f(a_j)}M$, respectively,
such that every $u\in\mathcal{U}_j$ and $v\in\mathcal{V}_j$ are linearly independent. 
Since $TM$ is locally trivial there is a  neighborhood $S_j$ of $f(a_j)$ 
such that for $p\in S_j$ we have (by abuse of notation) $\mathcal{U}_j\cup\mathcal{V}_j\subset T_pM$.
There exists $\delta>0$ such that $\left\|w_j-v\right\|_\rho<\delta$ implies $v\in \mathcal{V}_j$. 
Moreover, we can choose a neighborhood $U_j$ of $a_j$ in $\D$
such that for every $\zeta\in U_j$ we have $f(\zeta)\in S_j$, $f'(\zeta)\in \mathcal{U}_j$ and 
$\left\|w_j-g_1'(\zeta)\right\|_\rho<\delta/2$. 

For $0<\lambda\leq 1$ we define 
$g_\lambda(\zeta)=\pi_M(\Pi^{-1}(\zeta,\tilde{\Psi}_P^{-1}(\lambda \phi_h)(\zeta)))$, $\zeta\in  \overline{\D}$. We have
$$g_\lambda'(\zeta)=f'(\zeta)+\lambda g_1'(\zeta)+ O(\delta,\lambda),\ (\delta,\lambda)\to 0.$$
Hence, for $\zeta\in U_j$ and for small $\lambda$ we have $f'(\zeta)\in\mathcal{U}_j$ and 
$g_\lambda'(\zeta)-f'(\zeta)\in\mathcal{V}_j$, that is, $g_\lambda$ is immersed 
on every $U_j$. Moreover, $g_\lambda$ tends to $f$ when $\lambda$ tends to zero. Hence 
we can set $\lambda$ small enough so that $g=g_\lambda$ is immersed on the complement 
$\overline{\D}\backslash\cup_{j=1}^{m}U_j$ as well.
\end{proof}

\subsection{Attaching discs to a real torus}
We present a geometric construction from \cite{PROPER, ST2} needed in the sequel. Given a real torus we attach a disc to its boundary. We restrict to almost complex manifolds with $\dim_{\R}M=4$. 

\vskip 0.1 cm
\noindent{\bf\emph{Formulation of the problem:}} Let $g$ be a $J$-holomorphic immersion defined on $\D_{\gamma}:=(1+\gamma)\D$ for $\gamma>0$. Given a neighborhood $U$ of $\partial{\D},$ we consider $J$-holomorphic discs $f_z\colon \D_\gamma\to M$ satisfying the condition $f_z(0)=g(z)$ and such that the direction $\mathrm{d}f_z\left(\frac{\partial }{\partial \textrm{Re}(z)}\right)$ is not tangent to $g$. Furthermore, we assume that the map
$$G\colon U\times\D_\gamma \to M, \;\;\; G(z,w)=f_z(w)$$
is smooth, that the winding number of its $w$-component around $\partial \D$ is zero and that the set of points where $G$ is not diffeomorphic is discrete in every fiber $G(z,\cdot)$. We seek a family of $J$-holomorphic discs centered at $g(0)$ and attached to the real torus  $\Lambda=G(\partial\D\times\partial \D).$ 

\vskip 0.2 cm
The first step is to extend the family $G$ to the whole bidisc $\overline{\D}^2$. By the assumption on the winding number the vector field $$X_{g(z)}:=\textrm{d}G_z(z,0)\left(\frac{\partial }{\partial \textrm{Re}(w)}\right)$$ can be extended from $U$ to the whole neighborhood of $\overline{\D}$ as a non-vanishing vector field. Hence we can apply the Nijenhuis-Woolf theorem and construct small discs with centers in the set $\D\backslash U$ \cite[sec. ~5.3]{PROPER}. 

Next, we push forward the structure $\tilde{J}=\mathrm{d}G^{-1}\circ (J)\circ \mathrm{d}G$. Since the set of singular points for $G$ is discrete in every fiber $G(z,\cdot)$ we only have removable singularities and $\tilde{J}$ can be defined on some neighborhood of the closed bidisc in $\mathbb{C}^2$ \cite[Theorem 3.1]{ST2}. The discs $\zeta\mapsto(z(\zeta),w(\zeta))$ are $\tilde{J}$-holomorphic if and only if they solve the following elliptic system 
\begin{equation*}
	z_{\bar{\zeta}}=c(z,w)\overline{z_{\zeta}}, \qquad
  w_{\bar{\zeta}}=d(z,w)\overline{z_{\zeta}}, 
\end{equation*}
where $c,d:\overline{\mathbb{D}}\times (1+\gamma)\overline{\mathbb{D}}\to\mathbb{C},$ $\gamma>0$, are functions of class $\mathcal{C}^{\beta}$ in $z$ uniformly in $w$ for some $0<\beta<1$ and Lipschitz in $w$ uniformly in $z$,
such that $\left|c(z,w)\right|\leq c_0<1$ and $c(z,0)=d(z,0)=0$ 
(see \cite[sec. 4]{ST2}). Thus the construction of the disc attached to $\Lambda$ is now reduced to finding a solution of the above system attached to the boundary of the bidisc $\left|z(\zeta)\right|=\left|w(\zeta)\right|=1$ for $\zeta\in\partial\D$ that stays inside its small neighborhood.
\begin{lemma}
\label{lema_RH}
 For every $n\in\mathbb{N}$ there exists a function pair $(u_{n},v_{n})$, such that
 \begin{itemize}
   \item[\rm (i)] the disc $(z(\zeta), w(\zeta))=(\zeta 
    e^{u_{n}(\zeta)},\zeta^{n}e^{v_{n}(\zeta)})$ is $\tilde{J}$-holomorphic and attached 
    with its boundary to $\partial\mathbb{D}\times\partial\mathbb{D}$,
   \item[\rm (ii)] $\left|\zeta^{n}e^{v_{n}(\zeta)}\right|<1+\gamma$ for 
   $\zeta\in\overline{\D}$ and $n$ big 
    enough,
   \item[\rm (iii)] $\left\|u_{n}\right\|_{\infty}$ tends to zero, 
   when $n$ tends to $\infty$, 
    and
   \item[\rm (iv)] there exists $C>0$ such that 
   $\left\|v_n\right\|_{\mathcal{C}^{\beta}(\overline{\D})}<C$ for every $n\in\N$.
 \end{itemize}   
\end{lemma}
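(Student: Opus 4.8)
The plan is to find the pair $(u_n,v_n)$ by inserting the \emph{ansatz} $z(\zeta)=\zeta e^{u(\zeta)}$, $w(\zeta)=\zeta^{n}e^{v(\zeta)}$ directly into the elliptic system and reading off equations for $u$ and $v$. First I would compute $z_\zeta=e^{u}(1+\zeta u_\zeta)$, $z_{\bar\zeta}=\zeta e^{u}u_{\bar\zeta}$ and $w_{\bar\zeta}=\zeta^{n}e^{v}v_{\bar\zeta}$, and substitute into $z_{\bar\zeta}=c(z,w)\overline{z_\zeta}$ and $w_{\bar\zeta}=d(z,w)\overline{z_\zeta}$. This yields
\begin{equation*}
u_{\bar\zeta}=\frac{c(z,w)}{\zeta}\,e^{\bar u-u}\bigl(1+\bar\zeta\,\overline{u_\zeta}\bigr),
\qquad
v_{\bar\zeta}=\frac{d(z,w)}{\zeta^{n}}\,e^{\bar u-v}\bigl(1+\bar\zeta\,\overline{u_\zeta}\bigr),
\end{equation*}
together with the boundary conditions forced by the attachment requirement: since $|z|=|\zeta|\,e^{\mathrm{Re}\,u}$ and $|w|=|\zeta|^{n}e^{\mathrm{Re}\,v}$, the torus condition $|z|=|w|=1$ on $\partial\D$ is equivalent to $\mathrm{Re}\,u=\mathrm{Re}\,v=0$ on $\partial\D$. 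Observe that the ansatz already forces $z(0)=w(0)=0$, so the disc is automatically centered at $g(0)$, and the winding numbers of $z$ and $w$ around the origin equal $1$ and $n$ as required.

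The first point to check is that the right-hand sides extend regularly across $\zeta=0$. Because $c(z,0)=d(z,0)=0$ and $c,d$ are Lipschitz in $w$, along our disc we have $|c(z,w)|,|d(z,w)|\le L|w|=L|\zeta|^{n}e^{\mathrm{Re}\,v}$, so $c/\zeta=O(|\zeta|^{n-1})$ and $d/\zeta^{n}=O(1)$. Thus the coefficient of the $u$-equation is \emph{small}, of order $|\zeta|^{n-1}$ for large $n$, whereas the coefficient of the $v$-equation stays bounded \emph{uniformly} in $n$. This dichotomy is precisely what will produce conclusions (iii) and (iv).

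Next I would solve the system by a fixed-point scheme. For the homogeneous boundary condition $\mathrm{Re}(\cdot)=0$ on $\partial\D$ (index zero, once the factors $\zeta$ and $\zeta^{n}$ have been peeled off) I would build a solution operator $R$ for the scalar Riemann--Hilbert problem $\phi_{\bar\zeta}=g$, $\mathrm{Re}\,\phi|_{\partial\D}=0$, from the Cauchy--Green transform $T$ of \cite{VEKUA} together with its Schwarz-type boundary correction, normalized to kill the one real-dimensional kernel of purely imaginary constants (this residual freedom is only a rotation $z\mapsto e^{ic}z$ and is irrelevant for (i)--(iv)). The two equations are of generalized-analytic type in the sense of \cite{BOJARSKI} because of the $\overline{u_\zeta}$ term, whose coefficient $\alpha\bar\zeta$ with $|\alpha|=O(|\zeta|^{n-1})$ is a genuine small Beltrami coefficient. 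Using the smoothing $T\colon\mathcal{C}^{\beta}\to\mathcal{C}^{1,\beta}$ and the smallness of this coefficient, the map $u\mapsto R(F(\cdot,u,v,u_\zeta))$ is a contraction in $\mathcal{C}^{1,\beta}(\overline{\D})$, and similarly for $v$; for $n$ large this produces a unique small solution $(u_n,v_n)$, which is exactly (i).

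The estimates then follow by tracking the two coefficient sizes. Since the $u$-equation has right-hand side of order $|\zeta|^{n-1}\to0$ in $\mathcal{C}^{\beta}$, its fixed point satisfies $\|u_n\|_{\infty}\to0$, giving (iii); this in turn controls $\overline{z_\zeta}=\overline{e^{u}(1+\zeta u_\zeta)}$ uniformly, so the $v$-equation has right-hand side bounded independently of $n$ and the smoothing estimate yields $\|v_n\|_{\mathcal{C}^{\beta}(\overline{\D})}\le C$, which is (iv). Finally (ii) is deduced from (iv): on $\partial\D$ one has $|w|=1$ because $\mathrm{Re}\,v_n=0$, while for $|\zeta|<1$ the Hölder bound together with the boundary vanishing gives $\mathrm{Re}\,v_n(\zeta)\le C(1-|\zeta|)^{\beta}$, so that $|\zeta|^{n}e^{\mathrm{Re}\,v_n}\le|\zeta|^{n}e^{C(1-|\zeta|)^{\beta}}<1+\gamma$ once $n$ is large. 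The step I expect to be the main obstacle is the third one: setting up the Riemann--Hilbert solution operator with the correct normalization and, above all, disentangling the coupling of $u$ and $v$ so that all estimates remain uniform in $n$ — in particular checking that the bounded-but-not-small coefficient $d/\zeta^{n}$ in the $v$-equation never spoils the contraction while $u_n$ is simultaneously driven to zero.
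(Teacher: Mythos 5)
Your reduction to the coupled nonlinear Riemann--Hilbert system for $(u,v)$ --- the computation of $u_{\bar\zeta}$ and $v_{\bar\zeta}$, the boundary conditions $\mathrm{Re}\,u=\mathrm{Re}\,v=0$ on $\partial\D$, and the dichotomy that $c(z,w)/\zeta=O(|\zeta|^{n-1})$ while $d(z,w)/\zeta^{n}\,e^{-v}=d(z,w)/w$ is bounded but not small --- is exactly the route taken in the sources this lemma rests on. Be aware that the paper itself does not prove Lemma~\ref{lema_RH}: it cites the proof of Theorem~4.1 of \cite{PROPER} for (i)--(iii) in the smooth case, invokes \cite{ST2} for the present regularity of $c$ and $d$ (only $\mathcal{C}^{\beta}$ in $z$, Lipschitz in $w$), and asserts that a careful reading of that proof yields the uniform bound (iv). Your derivations of (iii) and (iv) as a priori estimates, and of (ii) from (iv) (the inequality $\sup_{t\in[0,1]}(Ct^{\beta}-nt)\to 0$ that you implicitly use does hold, and is the same device the paper uses later via $|e^{-v_n(\zeta)}|<1+D(1-|\zeta|)^{\beta}$), are sound.

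The genuine gap is the one you flag yourself: existence, i.e.\ part (i). The $u$-equation is a small perturbation of $\bar\partial$ for large $n$, so a contraction closes there; but the right-hand side of the $v$-equation depends on $v$ through $d(z,\zeta^{n}e^{v})e^{-v}$ with a Lipschitz constant comparable to that of $d$, which is of order $1$ uniformly in $n$ and in no sense small. The map $v\mapsto R\bigl(d(z,\zeta^{n}e^{v})\zeta^{-n}e^{\bar u-v}(1+\bar\zeta\,\overline{u_\zeta})\bigr)$ is therefore not a contraction in general, and the two equations remain coupled since $w=\zeta^{n}e^{v}$ enters $c$ and $z=\zeta e^{u}$ enters $d$. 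What is actually needed at this step is the machinery of \cite{PROPER} and \cite{ST2}: uniform a priori $\mathcal{C}^{\beta}$ bounds obtained from the theory of generalized analytic functions (\cite{BOJARSKI,VEKUA}, similarity principle), followed by a continuity-method or Schauder-type fixed point rather than Banach's, together with the regularization of the H\"older-in-$z$ coefficients carried out in \cite{ST2}. As written, your argument establishes (ii)--(iv) conditionally on the existence of a solution, but does not deliver (i).
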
 

\noindent In case of smooth $c$ and $d$ parts (i), (ii) and (iii) follow directly from
the proof in \cite[Theorem 4.1.]{PROPER}. Moreover, the existence of a constant $C_n>0$ such that $\left\|v_n\right\|_{\mathcal{C}^{\beta}(\overline{\D})}<C_n$ for a fixed $n\in\N$ is proved. As commented in \cite[p. 405]{ST2} the result also holds under the present assumptions. Moreover, reading the proof carefully an uniform upper bound for $\left\|v_n\right\|_{\mathcal{C}^{\beta}(\overline{\D})}$ can be found. We leave the details to the reader. 

\section{Proof of the main theorem}
Let $|I|$ denote the normalized arc length of an arc $I\subset \partial\D$.

\begin{lemma}\label{resitve enacbe}
Let $I=\cup_{j=1}^m I_j$ be a union of closed pairwise disjoint arcs in the circle $\partial\D$ and $U=\cup_{j=1}^mU_j$ a union of their pairwise disjoint neighborhoods in $\C$. Let $\chi_n\colon U\cap \overline{\D}\to \C$ be an equicontinuous family of  functions satisfying the condition $\delta<\left|\chi_n\right|<1-\delta$ for some $0<\delta<1$ and every $n\in\N$. Then there exists $N\in\N$ such that for every $k\geq N$, $n\in \N$ and $j\in\left\{1,\ldots,m\right\}$ the equation
$z^k=\chi_n(z)$ has at least $k\left|I_j\right|$ solutions in $U_j\cap \overline{\D}$. 
\end{lemma}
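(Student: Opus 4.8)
The plan is to localize the count to a thin annular band hugging $\partial\D$ and to convert the problem, on each small piece of that band, into a winding-number computation. Observe first that no solution can lie on $\partial\D$ itself: there $|z^k|=1$ while $|\chi_n|<1-\delta$, so $z^k\neq\chi_n(z)$. The relevant solutions therefore live in the band $\{z:\delta<|z|^k<1-\delta\}$, i.e. $|z|\in(\delta^{1/k},(1-\delta)^{1/k})$, which for large $k$ is a band of radial width $O(1/k)$ collapsing onto $\partial\D$. Using that each $U_j$ is an \emph{open} neighborhood of the \emph{closed} arc $I_j$, I would fix $\eta>0$ with the $\eta$-neighborhood of $I_j$ inside $U_j$; then for $k$ large the portion of the band over a slightly extended arc lies inside $U_j\cap\D$, and I can cut it into pairwise (interior-)disjoint angular sub-strips $S_i$, each spanning one full period $2\pi/k$ of $\arg(z^k)$. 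The extra angular room coming from the openness of $U_j$ lets me fit at least $k|I_j|$ such sub-strips once $k$ is large.

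The engine is a solution-existence statement on a single $S_i$. I would pass to the variable $w=z^k$, which maps $S_i$ onto the closed annulus $A=\{\delta\le|w|\le1-\delta\}$ (the two radial edges of $S_i$ being glued), turning the equation into the fixed-point problem $w=\tilde\chi(w)$, where $\tilde\chi=\chi_n\circ(z^k)^{-1}$ takes values in $\{\delta<|\cdot|<1-\delta\}$. Set $G(w)=w-\tilde\chi(w)$; since $|\tilde\chi|<1-\delta=|w|$ on the outer circle, $G$ is there homotopic to the identity and has winding number $1$ about $0$, while on the inner circle $|w|=\delta<|\tilde\chi(w)|$ gives $G$ the same winding number as $\tilde\chi$. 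If that inner winding number is $0$, the two boundary winding numbers of $G$ differ, and a nonvanishing continuous extension of $G$ to $A$ is impossible; hence $G$ vanishes in the open annulus, producing a solution with $\delta<|w|<1-\delta$, i.e. in the interior of the band.

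The decisive quantitative point is forcing the inner winding number to be $0$, which holds as soon as $\tilde\chi$ restricted to the inner circle has image of diameter smaller than $\delta$ (so that image lies in a disc about its center avoiding $0$), i.e. as soon as the oscillation of $\chi_n$ over $S_i$ is $<\delta$. Here \textbf{equicontinuity is indispensable and is the main obstacle to dispatch cleanly}: it supplies a modulus of continuity independent of $n$, so that $\diam S_i=O(1/k)\to0$ yields a \emph{single} threshold $N$ making this oscillation $<\delta$ simultaneously for every $n\in\N$; taking the maximum over the finitely many indices $j$ gives one $N$ working for all $j$ as well. This uniformity is exactly what lets the same $N$ serve all $n$ and $j$ in the statement.

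Finally I would address distinctness. The solutions produced lie in disjoint open angular sectors corresponding to the $S_i$, so they are automatically distinct unless one lands on a shared dividing ray. If the solution set of $z^k=\chi_n(z)$ is infinite the claim is trivial; otherwise it is finite, and I would rotate the whole partition by a small common offset $\sigma\in[0,2\pi/k)$ chosen to avoid the finitely many bad values for which a solution sits on a dividing ray (this keeps each strip a full period and, for large $k$, still inside $U_j$). Each of the $\ge k|I_j|$ strips then contains at least one solution in its interior, and these are pairwise distinct, giving at least $k|I_j|$ solutions in $U_j\cap\overline{\D}$. The remaining verifications are routine bookkeeping: that the band sits inside $U_j\cap\overline{\D}$ for large $k$, and that $w=z^k$ is the asserted homeomorphism $S_i\to A$.
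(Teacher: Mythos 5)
Your overall strategy --- localize to the thin band $\{\delta<|z|^k<1-\delta\}$ hugging $\partial\D$, use equicontinuity to get an oscillation bound uniform in $n$ over regions of angular size $O(1/k)$, produce one solution per region by a winding-number argument, and count at least $k|I_j|$ regions --- is exactly the paper's. But your per-strip existence step has a genuine gap. After gluing the two radial edges of $S_i$, the function $\tilde\chi=\chi_n\circ(z^k)^{-1}$ is \emph{not} continuous on the annulus $A$: the two edges carry different values of $\chi_n$ (they differ by up to the oscillation of $\chi_n$ on $S_i$), so $G(w)=w-\tilde\chi(w)$ has a jump across the slit. Its restrictions to the two boundary circles are then discontinuous at the points where the slit meets them, the ``boundary winding numbers'' are not defined, and the conclusion ``different winding numbers force a zero'' does not apply. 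The difficulty is not cosmetic: a full-period strip genuinely need not contain a solution. If $\chi_n$ is nearly constant equal to $c$ with $\arg c$ close to the common slit direction $k\theta_i\ (\mathrm{mod}\ 2\pi)$, the $k$-th roots of $c$ sit near the radial edges and can all migrate into the neighbouring strips, leaving some strips empty (and others with two solutions). Since every strip produces the \emph{same} slit ray in the $w$-plane, your single rotation by $\sigma$ can only push solutions off the dividing rays; it cannot keep the ray away from the $\sim k$ value clusters $\chi_n(S_i)$ simultaneously, so the bad configuration cannot be ruled out.

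The paper avoids this by choosing different basic regions: it covers $I$ by finitely many balls $V_r$ centered at points $e^{it_r}\in I$ on which equicontinuity gives $|\chi_n(z)-\chi_n(e^{it_r})|<\delta/2$, and takes as regions the $k$ preimage components $\Delta^{n,k}_{r,l}$ of the disc $\Delta^n_r$ of radius $\delta/2$ centered at the \emph{value} $\chi_n(e^{it_r})$ under $z\mapsto z^k$. On the boundary of each component, $z^k-\chi_n(e^{it_r})$ traces $\partial\Delta^n_r$ once around its center, so it has winding number $1$, and Rouch\'e with the $\delta/2$ bound transfers this to $z^k-\chi_n(z)$, giving a solution in every component --- no slit, no discontinuity, no escaping roots. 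The count then follows from the equidistribution of the components along $\partial\D$. If you want to salvage your strip decomposition instead, you would have to count solutions in the \emph{union} of the strips over a slightly enlarged arc (where the interior radial edges cancel in pairs and only the two end edges contribute $O(1)$ errors), rather than claim one solution per strip; as written, the per-strip claim is false and the annulus degree argument does not compile.
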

\begin{proof}
Since the family of functions $\chi_n$ is equicontinuous we can cover $I$ with a finite number of open balls $V_r\subset U$, $r=1,\ldots, M$, centered at the points $e^{it_r}\in I$ such that for every $z\in V_r\cap \overline{\D}$ and every $n\in\N$ we have
\begin{eqnarray}\label{ocena}\left|\chi_n(z)-\chi_n(e^{it_r})\right|< \frac{\delta}{2}.\end{eqnarray}
Let $\Delta^n_r\Subset \D^*$ be the open disc of radius $\delta/2$ centered at $\chi_n(e^{it_r})$. The map $\D^*\to\D^*$, $z\longmapsto z^k$, is a $k$-fold covering. The preimage of $\Delta^n_r$ is a disjoint union of $k$ simply connected domains $\Delta^{n,k}_{r,l}\Subset \D^*$, $l=1,\ldots,k$. As the point $z$ traces $\partial \Delta^{n,k}_{r,l}$ once in the positive direction, the image point $z^k$ traces $\partial\Delta^n_r$ once in the positive direction. From the estimate $(\ref{ocena})$ we infer that the function $z\mapsto z^k-\chi_n(z)$ has winding number $1$ around $\partial\Delta^{n,k}_{r,l}$, and hence the equation $z^k=\chi_n(z)$ has a solution $z=z^{n,k}_{r,l}$ in  $\Delta^{n,k}_{r,l}\subset V_r\cap \D$. 

As $k\to\infty$, the sets $\Delta^{n,k}_{r,l}$ converge to the circle $\partial \D$ and are equidistributed around $\partial \D$.
Moreover, an uniform bound (depending only on $\delta$ and $k$) can be found for their diameter and their distance from the boundary. Hence, one can choose $k$ large enough so that  for every $j=1,\ldots,m$ the number of different solutions $z=z^{n,k}_{r,l}$ lying in the union of open balls $V_r$ that cover $I_j$ exceeds the proportional number $k\left|I_j\right|$. \end{proof}

In the proof of the main theorem we use also the envelope of the Poisson functional,
thus let us give here the necessary definitions and results. A {\em disc functional} on $M$ is a function 
\[
	H_M \colon \cO(\clD,M) \to\ol\R= [-\infty,+\infty]. 
\]
Let $p\in M$. The {\em envelope} of $H_M$ is the function $EH_M\colon M\to\overline\R$
defined by 
$$		EH_M(p)= \inf \left\{ H_M(f)\colon f\in \cO(\clD,M),\, f(0)=p\right\}.	$$
Given an upper semicontinuous function 
$u\colon M\to \R\cup\{-\infty\}$, the associated 
{\em Poisson functional}  is defined by
\begin{equation}
\label{eq:Poisson}
	P_u(f)= \frac{1}{2\pi} \int^{2\pi}_0 u(f(e^{i t}))\, {\mathrm d}t,
	\qquad f\in \cO(\clD,M).
\end{equation}
The envelope $EP_u$ of the Poisson functional 
is the largest plurisubharmonic minorant of the 
upper semicontinuous function $u$. In the case when $M=\C^n$ this was proved by 
Poletsky \cite{Poletsky1991,Poletsky1993} 
and by Bu and Schachermayer \cite{Bu-Schachermayer}.
The result was extended to some complex manifolds
by L\'arusson and Sigurdsson 
\cite{Larusson-Sigurdsson1998,Larusson-Sigurdsson2003}, 
and to all complex manifolds by Rosay \cite{Rosay1,Rosay2}.
Using the method of sprays the first named author and Forstneri\v{c}
extended the result to locally irreducible complex spaces \cite{DF2}.
The second named author proved the result in the case of an almost 
complex manifold $M$ of real dimension four \cite{Kuz}.

Now we turn to the proof of Theorem \ref{main_thm}.
Given a nonnegative function $\alpha$ on $M$  let
$$ \cF_\alpha=\{u \in \Psh(M)\colon u\le 0,\ \nu_u \ge\alpha\}.$$
If $u\in \cF_\alpha$ and $f\in \cO(\clD,M)$, then 
$u\circ f\le 0$ is a subharmonic function in a neighborhood of $\clD$ whose 
Lelong number at any point $\z\in \D$ satisfies 
\[
	\nu_{u\circ f}(\z) \ge \alpha(f(\z))\, .
\]
Hence $u\circ f$ is bounded above by the largest subharmonic function 
$v_{\alpha \circ f}\le 0$ on $\D$ satisfying $\nu_{v_{\alpha\circ f}}\ge \alpha\circ f$.
This maximal function $v_{\alpha\circ f}$ is the weighted sum of 
Green functions with coefficients $\alpha\circ f$: 
\[
	v_{\alpha\circ f}(z) = \sum_{\z\in \D} \alpha(f(\z)) \, 
		\log\left| \frac{z-\z}{1-\bar \z z}\right|,
		\quad z\in\D.
\]
(If the sum is divergent then $v\equiv-\infty$.)
Indeed, the difference between $v_{\alpha\circ f}$ and the right hand side above
is subharmonic on $\ol\D$, except perhaps at the points $z$ where 
$\alpha(f(z))>0$; near these points it is bounded above,
so it extends to $\D$ as a subharmonic function.
Since it is clearly $\le 0$ on $\partial\D$, the maximum principle implies that
it is $\le 0$ on all of $\D$ which proves the claim.

Setting $z=0$, we see that for every $u\in \cF_\alpha$ and 
$f\in \cO(\clD,M)$ we have 
\begin{equation} \label{ocena_L_K}
	u(f(0)) 
	\le  \sum_{\z\in \D} \alpha(f(\z)) \log|\z| 
	\le \inf_{\z\in \D} \alpha(f(\z)) \log|\z|.
\end{equation}
The first expression is the Lelong functional (\ref{eq:Lelong}) and the second expression determines the following disc functional on $M$ with values in $[-\infty,0]$:
\begin{equation}
	K_\alpha(f) = \inf_{\z\in \D} \alpha(f(\z)) \log|\z|.
	\label{defK} 
\end{equation}

By taking infima over all analytic discs $f$ centered at $p\in M$ we obtain for any $u\in \cF_\alpha$:
\begin{equation}
\label{eq:ineq-envelopes}
	u(p)\le EL_\alpha(p)
	\le EK_\alpha(p)
	 =: k_\alpha(p).
\end{equation}
The function $k_\alpha\colon M\to [-\infty,0]$,
which is denoted $k^\alpha_M$ in \cite[p.\ 21]{Larusson-Sigurdsson1998},
is related to a certain function studied by Edigarian \cite{Edigarian1997}.

\begin{lemma}
Let $(M,J)$ be an almost complex manifold and $\dim_\R M=4$. Given a nonnegative function $\alpha$ on $M$ the function $k_\alpha$ defined above is upper semicontinuous and
$$EP_{k_\alpha}=v_\alpha .$$
\end{lemma}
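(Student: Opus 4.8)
The plan is to combine two ingredients that are already at hand. The first is the chain $v_\alpha\le EL_\alpha\le EK_\alpha=k_\alpha$ from \eqref{eq:ineq-envelopes}, which in particular yields $u\le k_\alpha$ for every $u\in\cF_\alpha$, as well as $k_\alpha\le 0$. The second is the theorem of \cite{Kuz}, applicable since $\dim_\R M=4$: as soon as $k_\alpha$ is known to be upper semicontinuous, its Poisson envelope $EP_{k_\alpha}$ is the largest $J$-plurisubharmonic minorant of $k_\alpha$. Granting upper semicontinuity, the equality $EP_{k_\alpha}=v_\alpha$ splits into two inclusions. Every $u\in\cF_\alpha$ is $J$-plurisubharmonic with $u\le k_\alpha$, hence a plurisubharmonic minorant of $k_\alpha$ and so $u\le EP_{k_\alpha}$; taking the supremum gives $v_\alpha\le EP_{k_\alpha}$. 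For the reverse inequality I will verify that $EP_{k_\alpha}\in\cF_\alpha$, which forces $EP_{k_\alpha}\le v_\alpha$. The whole statement therefore reduces to two points: (a) $k_\alpha$ is upper semicontinuous, and (b) $\nu_{EP_{k_\alpha}}\ge\alpha$.

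I would prove (b) first, since the construction it uses also settles part of (a). Fix $p$ and let $(U,\psi)$ and the discs $f_q$ be as in Lemma \ref{posledica NW}, so that $f_q(0)=q$ and $f_q(-|\psi(q)|)=p$ for $q$ near $p$. Then
\[
	k_\alpha(q)\le K_\alpha(f_q)\le \alpha\bigl(f_q(-|\psi(q)|)\bigr)\log|\psi(q)|=\alpha(p)\log|\psi(q)|.
\]
Since $EP_{k_\alpha}\le k_\alpha$, for every $q$ with $|\psi(q)|\le r$ we obtain $EP_{k_\alpha}(q)\le\alpha(p)\log r$; dividing by $\log r<0$ and letting $r\to0$ gives $\nu_{EP_{k_\alpha}}(p)\ge\alpha(p)$. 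As $EP_{k_\alpha}$ is $J$-plurisubharmonic and $\le k_\alpha\le 0$, this shows $EP_{k_\alpha}\in\cF_\alpha$.

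For (a) fix $p$ and $c>k_\alpha(p)$; the case $c>0$ is trivial because $k_\alpha\le 0$, so assume $c\le 0$. If $\alpha(p)>0$ then $k_\alpha(p)=-\infty$, and the estimate above shows $k_\alpha(q)\le\alpha(p)\log|\psi(q)|\to-\infty$ as $q\to p$, so $k_\alpha<c$ near $p$. If $\alpha(p)=0$, choose $f\in\cO(\clD,M)$ with $f(0)=p$ and $K_\alpha(f)<c$; by the definition \eqref{defK} of $K_\alpha$ there is $\z_0\in\D$ with $\alpha(f(\z_0))\log|\z_0|<c\le 0$. This forces $\alpha(f(\z_0))>0$ and $\z_0\neq 0$ (the contribution at the center vanishes because $\alpha(p)=0$). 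Applying Lemma \ref{translacija} with $b=\z_0$ yields, for every $q$ in a neighborhood of $p$, a disc $f_q$ with $f_q(0)=q$ and $f_q(\z_0)=f(\z_0)$, so that $k_\alpha(q)\le\alpha(f(\z_0))\log|\z_0|<c$. In either case $k_\alpha<c$ on a neighborhood of $p$, which is the desired upper semicontinuity.

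The genuinely almost complex content --- and the step I expect to be most delicate --- is the availability of the comparison discs: passing a $J$-holomorphic disc through a prescribed nearby point (Lemma \ref{posledica NW}) and translating the center while pinning a second point (Lemma \ref{translacija}), both specific to real dimension four, together with the Poisson-envelope theorem of \cite{Kuz}. Granting these, the remaining work is the bookkeeping above; the only real care is the $0\cdot(-\infty)$ convention at the center of the disc and the consequent split into the cases $\alpha(p)>0$ and $\alpha(p)=0$.
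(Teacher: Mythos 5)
Your proof is correct and follows essentially the same route as the paper: Lemma \ref{translacija} supplies the comparison discs for upper semicontinuity, Lemma \ref{posledica NW} gives the bound $\nu_{EP_{k_\alpha}}\ge\alpha$, and the Poisson-envelope theorem of \cite{Kuz} closes the argument. Your explicit case split $\alpha(p)>0$ versus $\alpha(p)=0$ is only a small refinement that treats the possibility $k_\alpha(p)=-\infty$ more carefully than the paper's $\epsilon$-formulation, not a genuinely different approach.
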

\noindent The proof is similar to the proof of \cite[Proposition 5.2]{Larusson-Sigurdsson1998}.
We include it to point out the use of the results from almost complex analysis.

\begin{proof}
Set $p\in M$ and $\epsilon>0$. Since $k_\alpha=EK_\alpha$
there exist $f_p\in \cO(\clD,M)$ with $f_p(0)=p$ and $b\in \D^*$ such that
$$\alpha(f_p(b))\log|b|<k_\alpha(p)+\epsilon.$$
By Lemma \ref{translacija} there is a neighborhood $U$ of $p$ such that
for any $q\in U$ there is a $J$-holomorphic disc $f_q$ centered at $q$ 
with $f_q(b)=f_p(b)$. Therefore for any $q\in U$ we have
$$k_\alpha(q)\le K_\alpha(f_q)\le \alpha(f_q(b))\log|b|<k_\alpha(p)+\epsilon\ ,$$
which proves that $k_\alpha$ is upper semicontinuous.

Since $k_\alpha$ is upper semicontinuous its Poisson envelope $EP_{k_\alpha}$ is the 
largest plurisubharmonic function satisfying $EP_{k_\alpha}\le k_\alpha$.
By (\ref{eq:ineq-envelopes}) we have $u\le k_\alpha$ for all $u\in \cF_\alpha$.
Once we prove that Lelong numbers of $EP_{k_\alpha}$ are bounded bellow by $\alpha$
we get $EP_{k_\alpha}=v_\alpha$.

Choose $p\in M$. By Lemma \ref{posledica NW}
there exist a coordinate system $\psi \colon U\to \R^4$ and an open set $V$, $p\in V\subset U$, 
such that for every $q\in V$ 
we have a $J$-holomorphic disc $f_q$ centered at $q$ and $f(-|\psi(q)|)=p$.
Therefore we have by the same reasoning as above that
$$k_\alpha(q)\le K_\alpha(f_q)\le \alpha(f(-|\psi(q)|))\log|\psi(q)|=
\alpha(p)\log|\psi(q)|$$
for every $q \in V$ which implies
$$\nu_{EP_{k_\alpha}} (p)= \lim_{r\to 0} \frac{\sup_{|\psi(q)|\le r} EP_{k_\alpha}(q)}{\log r}\ge \lim_{r\to 0} \frac{\sup_{|\psi(q)|\le r} k_\alpha(q)}{\log r}\ge \alpha(p).$$
\end{proof}

\begin{proof}[Proof of Theorem 1] 
By the above lemma and by (\ref{eq:ineq-envelopes}) and taking supremum over all $u\in\cF_\alpha$ 
we get $EP_{k_\alpha}=v_\alpha\le E{L}_\alpha$, therefore it suffices to prove that $E{L}_\alpha\leq EP_{k_{\alpha}}$. Equivalently, we need to show that for every continuous function $\varphi\colon M\to \R$ with $\varphi\geq k_\alpha$, immersed $J$-holomorphic disc $g\colon\clD\to M$, and $\epsilon>0$ there exists a $J$-holomorphic disc $f$ such that $f(0)=g(0)$ and
$${L}_{\alpha}(f)=\sum_{z\in\D}\alpha(f(z))\log|z|<\frac{1}{2\pi}\int_0^{2\pi}\varphi\circ g(e^{it})\mathrm{d}t+\epsilon.$$

Since $k_\alpha=EK_{\alpha}$ we have for every fixed $z_0=e^{it_0}\in\partial\D$ a $J$-holomorphic disc $f_{z_0}\colon\clD\to M$ and a point $b_0\in \D^*$ such that $f_{z_0}(0)=g(z_0)=:x_0$ and
$$\alpha(f_{z_0}(b_0))\log|b_0|<\varphi(x_0)+\frac{\epsilon}{3}.$$
We can assume that $f_0$ is immersed (Proposition \ref{imerzija}) and transversal to the initial one denoted by $g$.  
We wish to find a family of such close-to-extremal discs, smoothly parametrized 
by the points in the boundary of $g$. This will be accomplished by using Lemma \ref{translacija}.

There exists a neighborhood $U$ of $z_0$ such that for every point $z\in U$ there exists a $J$-holomorphic disc $f_{z}$ centered at $g(z)$ and satisfying the condition $f_{z}(b_0)=f_{z_0}(b_0)=:y_0.$ By continuity there is a nontrivial closed arc $I'\subset \partial\D$ around the point $z_0$ such that $I'\subset U$ and 
for any closed subarc $I\subset I'$
$$\alpha(y_0)\log|b_0|\cdot |I|<\int_I \varphi\circ g(e^{it})\frac{\textrm{d}t}{2\pi}+\frac{\epsilon}{3}|I|.$$
Repeating this argument at other points of the circle $\partial\D$ we can cover the boundary by closed arcs $I'_1,\ldots,I'_m\subset \partial\D$ with neighborhoods $U'_1,\ldots,U'_m$ such that the two consecutive arcs meet exactly at a common end point and define smooth maps $\tilde{G}_j\colon U'_j\times \overline{\D}\to M$,  
$\tilde{G}_j(z,w)=f_z(w)$, for $j=1\ldots,m,$ with the following properties for $z\in U'_j$:

i) the map $w\mapsto \tilde{G}_j(z,w)$ is $J$-holomorphic and $\tilde{G}_j(z,0)=g(z)$,

ii) $\tilde{G}_j(z,b_j)=:y_j$ for some $b_j\in \D^*$ and for any closed subarc $I_j\subset I'_j$ 
               \begin{eqnarray}\label{ocena1pom}
                 \alpha(y_j) \log|b_j|\cdot|I_j|<\int_{I_j}\varphi\circ                      
                 g(e^{it})\frac{\mathrm{d}t}{2\pi}+\frac{\epsilon}{3}|I_j|.
               \end{eqnarray}
             
\noindent We need to deform the maps $\tilde{G}_j$ on the intersections of their domains
so that they glue into a smooth map $G$ defined on a sufficiently
large subset. 

Let $I'_{m+1}=I'_1$, $U'_{m+1}=U'_1$, and for each $j=1,\ldots,m$ let $p_j$ be the common point of the consecutive arcs $I'_j$ and $I'_{j+1}$. 
On a small neighborhood of $g(p_j)$ the set of small $J$-holomorphic discs is in bijective correspondence with the set of small standard holomorphic discs in $\C^2$ by the classical Nijenhuis-Woolf theorem. Let $V_j$ be the set of all $z$ for which $g(z)$ lies in this neighborhood and such that $p_j\in V_j\subset U'_j\cap U'_{j+1}$. We choose closed pairwise disjoint arcs $I_j\subset I'_j$ and 
pairwise disjoint neighborhoods $U_j$, $I_j\subset U_j\subset U'_j$, such that 
$p_j\notin { \overline{U}_j \cup  \overline{U}_{j+1}} $,
$\partial\D\setminus\cup_{j=1}^m I_j\subset \cup_{j=1}^m V_j$, and
\begin{eqnarray}\label{ocena3pom}
\left|\int_{\partial\D\setminus\cup_{j=1}^m I_j}\varphi\circ g(e^{it})\frac{\mathrm{d}t}{2\pi}\right|<\frac{\epsilon}{3}.
\end{eqnarray}
Furthermore, choose $V_j'\Subset V_j$ slightly smaller open sets and smooth functions $\psi_j\colon U_{j}\to (0,1]$ 
such that $\psi_j(z)=1$ for $z\in U_j \setminus (V_j\cup V_{j-1})$ and $\psi_j(z)$ attains a very small positive value for $z\in (U_j\cap V'_j)\cup (U_j\cap V'_{j-1}).$ We define $G(z,w):=\tilde{G}_j(z,\psi_j(z)w)$ for $z\in U_j$. The discs with centers in $g(U_j\cap V'_j)$ and $g(U_j\cap V'_{j-1})$ are now small. We apply the bijective correspondence defined in the neighborhood of $g(p_j)$ and construct a smoothly varying family of immersed $J$-holomorphic discs with centers in $g(V_j\setminus (U_j\cup U_{j+1}))$. Moreover, we may assume that the winding number of the $w$-component around $\partial \D$ is trivial (otherwise we rotate the small discs). 

We now refer to the subsection 1.2. We extend the family $G$ to some neighborhood of the bidisc $\overline{\mathbb{D}}^2$ and push forward the structure. For $n\in\N$ let $u_{n}$ and $v_{n}$ be defined as in Lemma \ref{lema_RH}. We choose $\eta$, $0<\eta<\epsilon$, such that
\begin{eqnarray}\label{ocena2pom}
                 (\alpha (y_j)+1)\log|b_j|+\frac{\eta}{3}<0  ,\ \
                 j=1,\ldots,m.
               \end{eqnarray}
For every $j=1 , \ldots, m$ we can choose a small open neighborhood
 $\widetilde U_j\Subset U_j$ of the arc $I_j$ such that for every $n\in \N$ and $\zeta\in  \cup_{j=1}^m \widetilde U_j\cap \overline{\D}$
 we have
\begin{equation}
\label{nova}
|e^{-v_n(\zeta)}|<\min\left\{1+\frac{\epsilon}{3(\alpha(y_j)+1)},\frac{1}{\max\left\{|b_1|,\ldots, |b_m|\right\}}\right\} .
\end{equation}
Note that this is possible since by part (iv) of Lemma \ref{lema_RH} the norm $\left\|v_n\right\|_{\mathcal{C}^{\beta}(\overline{\D})}$ is uniformly bounded by a constant and thus there exists $D>0$ such that $\left\|e^{-v_n}\right\|_{\mathcal{C}^{\beta}(\overline{\D})}<D.$ Since $ |e^{-v_n(\zeta)}|=1$ for $\zeta\in\partial\D$ we have  $$|e^{-v_n(\zeta)}|<1+D(1-|\zeta|)^{\beta},\;\;  |\zeta|<1.$$ 
Hence, it suffices to choose the sets $\widetilde U_j$ close enough to the boundary circle. 

Let us define 
$$\chi_n\colon \cup_{j=1}^m \widetilde U_j\cap\overline{\D}\to \C,\;\; \chi_n(\zeta):=b_je^{-v_n(\zeta)}, \;n\in\N.$$ 
By (\ref{nova}) there exists $\delta>0$ such that $\delta<|\chi_n|<1-\delta$ for every $n\in\N$. Furthermore, the family is equicontinuous since the norms $\left\|e^{-v_n}\right\|_{\mathcal{C}^{\beta}(\overline{\D})}$ are uniformly bounded. Hence, by Lemma \ref{resitve enacbe} there exists $N_1\in\N$ such that the equation $\zeta^n=\chi_n(\zeta)$ has at least $n|I_j|$ solutions in every $\widetilde U_j\cap \D$ for $n\geq N_1$.
We denote them by $\zeta_{j,l}$, $l=1,\ldots,l_j$
and
\begin{equation}
 \label{stevilo_resitev}
 l_j\ge n|I_j|,\ \ j=1,\ldots,m ,\, n\geq N_1.
\end{equation}
By part (iii) of Lemma \ref{lema_RH} there exists $N_2\in\N$ such that $\zeta e^{u_n(\zeta)}\in U_j$ for every $\zeta \in \widetilde U_j$ and $n\geq N_2$. Finally, by parts (i) and (ii) of Lemma \ref{lema_RH} we have $N_3\in\N$ such that for $n\geq N_3$ the disc $f\colon\overline{\D}\to M$ given by
$$f(\zeta)=G\left(\zeta e^{u_n(\zeta)},\zeta^n e^{v_n(\zeta)}\right)$$
is well defined and $J$-holomorphic with $f(0)=g(0)$. 

Let us fix now $n$ greater than $\max\left\{N_1,N_2,N_3\right\}.$ We have 
$$f(\zeta_{j,l})=G\left(\zeta_{j,l}\cdot e^{u_n(\zeta_{j,l})},b_j\right)=y_j,\ \
l=1,\ldots, l_j,\, j=1,\ldots, m.$$
Hence
\begin{eqnarray}\label{ocena2}
\sum_{\zeta\in\D}\alpha(f(\zeta))\log|\zeta|\leq \sum_{j=1}^m\alpha(y_j)\sum_{l=1}^{l_j}\log|\zeta_{j,l}|.\end{eqnarray}
Note that by (\ref{nova}), (\ref{stevilo_resitev}) and (\ref{ocena2pom}) one has
$$\sum_{l=1}^{l_j}\log|\zeta_{j,l}|=\frac{1}{n}\sum_{l=1}^{l_j}\left(\log |b_j|+\log|e^{-v_{n}(\zeta_{j,l})}|\right)<|I_j|\log |b_j|+\frac{\epsilon|I_j|}{3(\alpha(y_j)+1)}.$$
Combining this with (\ref{ocena2}), (\ref{ocena1pom}) and (\ref{ocena3pom}) we get
$$\sum_{\zeta\in\D}\alpha(f(\zeta))\log|\zeta|< \sum_{j=1}^m \alpha(y_j)|I_j|\log|b_j|+\frac{\epsilon}{3}<\int_{0}^{2\pi}\varphi\circ g(e^{it})\frac{\mathrm{d}t}{2\pi}+\epsilon.$$
The proof is complete.
\end{proof}


\begin{thebibliography}{4}

\bibitem{BOJARSKI} 
\textsc{B.\ Bojarski}, Theory of a generalized analytic vector (Russian). 
\emph{Ann.\ Pol.\ Math.}, {\bf 17} (1966), 281--320.  

\bibitem{Bu-Schachermayer}
\textsc{S.\ Q.\ Bu,} and \textsc{W.\ Schachermayer},
Approximation of Jensen measures by image measures 
under holomorphic functions and applications.
\emph{Trans.\ Amer.\ Math.\ Soc.}, {\bf 331} (1992), 585--608.



\bibitem{PROPER}
\textsc{B.\ Coupet, A.\ Sukhov,} and \textsc{A.\ Tumanov}, 
Proper J-holomorphic discs in Stein domains of dimension 2. 
\emph{Amer.\ J.\ Math.} {\bf 131} (2009), 653--674.

\bibitem{BDD}
\textsc{B.\ Drinovec Drnov\v sek},
Discs in Stein manifolds containing given discrete sets,
\emph{Math.\ Z.}, {\bf 239} (2002), 683--702.

\bibitem{DF2}
\textsc{B.\ Drinovec Drnov\v sek} and \textsc{F.\ Forstneri\v c},
The Poletsky-Rosay theorem on singular complex spaces,
\emph{Indiana Univ.\ Math.\ J.}  {\bf 61} (2012), 1407-1423.

\bibitem{BDDFF}
\textsc{B.\ Drinovec Drnov\v sek} and \textsc{F.\ Forstneri\v c},
{Disc functionals and Siciak-Zaharyuta extremal functions on singular varieties},
\emph{Ann.\ Polon.\ Math.} {\bf 106} (2012), 171--191. 

\bibitem{Edigarian1997}
\textsc{A.\ Edigarian},
{On definitions of pluricomplex Green function},
\emph{Ann.\ Polon.\ Math.}, {\bf 67} (1997),  233--246. 


\bibitem{IS} 
\textsc{S.\ Ivashkovich} and \textsc{V.\ Shevchisin}, 
Complex Curves in Almost-Complex Manifolds and Meromorphic Hulls, 
\emph{Inst.\ f\"ur Math., Ruhr-Universit\ae t Bochum}, (1999).

\bibitem{Kuz}
\textsc{U.\ Kuzman},
Poletsky theory of discs in almost complex manifolds,
\emph{Complex variables and elliptic equations} to appear, doi: 10.1080/17476933.2012.734300

\bibitem{Larusson-Sigurdsson1998}
\textsc{F.\ L\'arusson} and \textsc{R.\ Sigurdsson}, 
{Plurisubharmonic functions and analytic discs on manifolds},
\emph{J.\ Reine Angew.\ Math.}, {\bf 501} (1998), 1--39. 

\bibitem{Larusson-Sigurdsson2003}
\textsc{F.\ L\'arusson} and \textsc{R.\ Sigurdsson}
{Plurisubharmonicity of envelopes of disc functionals on manifolds},
\emph{J.\ Reine Angew.\ Math.}, {\bf 555} (2003), 27--38. 

\bibitem{MCDUFF}
\textsc{D.\ McDuff} Singularities and positivity of $J$-holomorphic curves.
In `Holomorphic curves in Symplectic geometry', Eds.\ M.\ Audin, J.\ Lafontane, 
Birkhauser (1994), 191--215.

\bibitem{Poletsky1991}
\textsc{E.\ A.\ Poletsky},
{Plurisubharmonic functions as solutions of variational problems},
In: Several complex variables and complex geometry, Part 1 
(Santa Cruz, CA, 1989), 163--171,
Proc.\ Sympos.\ Pure Math., {\bf 52}, Part 1, 
Amer.\ Math.\ Soc., Providence, RI, 1991.

\bibitem{Poletsky1993}
\textsc{E.\ A.\ Poletsky}, 
{Holomorphic currents},
\emph{Indiana Univ.\ Math.\ J.}, {42} (1993), 85--144. 

\bibitem{Rosay1}
\textsc{J.-P.\ Rosay},
{Poletsky theory of disks on holomorphic manifolds},
\emph{Indiana Univ.\ Math.\ J.}, {\bf 52} (2003), 157--169. 

\bibitem{Rosay2}
\textsc{J.-P.\ Rosay},
{Approximation of non-holomorphic maps, and Poletsky theory of discs},
\emph{J.\ Korean Math. Soc.}, {\bf 40} (2003), 423--434.  


\bibitem{SIKORAV}
\textsc{J.\ C.\ Sikorav}, 
Some properties of holomorphic curves in almost complex manifolds. 
In `Holomorphic curves in Symplectic geometry', Eds.\ M.\ Audin, J.\ Lafontane, 
Birkhauser (1994), 165--189.

\bibitem{ST}
\textsc{A.\ Sukhov} and \textsc{A.\ Tumanov}, 
Deformations and transversality of pseudo holomorphic discs, 
\emph{J.\ d'Analyse Math.}  {\bf 116} (2012),  1--16.

\bibitem{ST3} 
\textsc{A.\ Sukhov} and \textsc{A.\ Tumanov}, 
Filling hypersurfaces by discs in almost complex manifolds of dimension 2, 
\emph{Indiana Univ.\ Math.\ J.}, {\bf 57} No.\ 1 (2008), 509--544.

\bibitem{ST2}
\textsc{A.\ Sukhov} and \textsc{A.\ Tumanov}, 
Regularization of almost complex structures and gluing holomorphic discs to tori, 
\emph{Ann.\ Scuola Norm.\ Sup.\ Pisa Cl.\ Sci.} {\bf 5} (2011), 389--411.

\bibitem{VEKUA} 
\textsc{I.\ N.\ Vekua}, 
Generalized analytic functions.
\emph{Pergamon Press, London-Paris-Frankfurt; 
Addison-Wesley Publishing Co., Inc., Reading, Mass.}, 1962. 


\end{thebibliography}
\end{document}